\newtheorem{theorem}{Theorem}
\newtheorem{proposition}[theorem]{Proposition}
\newtheorem{lemma}[theorem]{Lemma}
\newtheorem{corollary}[theorem]{Corollary}
\newtheorem{counterexample}[theorem]{Counterexample}
\newdefinition{notation}[theorem]{Notation}
\newproof{proof}{Proof}
\newcommand{\coloneqq}{:=}
\newcommand{\R}{\mathbb{R}}
\newcommand{\Z}{\mathbb{Z}}
\begin{document}
	\begin{frontmatter}
		\title{Exact Expressions and Reduced Linear Programmes for the Ollivier Curvature in Graphs}
		\author{Christy Kelly\fnref{fn1}}
		\ead{ckk1@hw.ac.uk}
		\address{Heriot-Watt University, Edinburgh, EH14 4AS}
	\begin{abstract}
		The Ollivier curvature has important applications in discrete geometry and network theory, in particular as a measure of local clustering. It is defined in terms of the Wasserstein distance which, in the discrete setting, can be regarded as an optimal solution of a particular linear programme. In certain classes of graph, this linear programme may be solved \textit{a priori} giving rise to exact combinatorial expressions for the Ollivier curvature. Bhattacharya and Mukherjee (2013) have apparently given exact expressions for the Ollivier curvature in bipartite graphs and graphs of girth 5; we show that these claims do not hold in general and identify the error in the argument of Bhattacharya and Mukherjee. We then repeat the analysis of Bhattacharya and Mukherjee for arbitrary graphs, taking this error into account, and present reduced---parallelly solvable---linear programmes for the calculation of the Ollivier curvature. This allows for potential improvements in the exact numerical evaluation of the Ollivier curvature, though the result heuristically suggests no general exact combinatorial expression for the Ollivier curvature exists. Finally we give an exact expression for the Ollivier curvature in a class of graphs defined by a particular combinatorial constraint motivated by physical considerations. 
	\end{abstract}
	%\begin{keyword}
		%Ollivier curvature \sep Wasserstein distance \sep discrete curvature \sep Integer programming \MSC{90C10}
	%\end{keyword}
	\end{frontmatter}
	
	\section{Introduction}
	The Ollivier curvature \cite{Ollivier_RCMS,Ollivier_RCMCMS,Ollivier_Survey} is a course analogue of the manifold Ricci curvature and is becoming an important tool in discrete geometry \cite{Najman_DiscreteCurvature} and applied network theory \cite{FarooqEtAl_Brain,Ni_RicIntTop,Sandhu_Cancer,Sandhu_Market,SiaEtAl_CommunityDetection,WangEtAl_WirelessNetwork,WangEtAl_Interference,WangEtAl_QUBO,WangEtAl_DiffGeom,WhiddenMatsen_SubtreeGraph}. The author's own interest in the topic relates to its applications in a possible model of quantum gravity in physics \cite{KellyEtAl, KellyTrugenberger,Trugenberger_RHLW, Trugenberger_CombQG}. The Ollivier curvature's foundations are in optimal transport theory and metric measure geometry \cite{Gromov_MetricStructures, Villani_OptimalTransport}, but for the discrete case it (or more precisely the closely related \textit{Wasserstein distance}) may be most readily understood as the solution to a linear programme \cite{Schrijver_LinearProgramming}; while linear programmes can be solved---both in principle and in practice---in polynomial time, the specification of the Ollivier curvature in terms of an optimisation problem makes it a relatively costly quantity to calculate for arbitrary graphs \cite{SamalEtAl}. 
	
	Indeed, recently there has been a move towards other notions of discrete curvature for applications in network theory; in particular, the \textit{Forman curvature} \cite{Forman,SreejithEtAl_Forman3,SreejithEtAl_Forman2,SreejithEtAl_Forman1} is an alternative notion of discrete curvature which is more readily calculable than the Ollivier curvature and typically highly correlated to it \cite{SamalEtAl,WeberEtAl_Forman}. It should be stressed, however, that the underlying intuition of the Ollivier and Forman curvatures are quite different: the former generalises the idea that for two sufficiently nearby open balls in a positively curved Riemannian manifold, the average distance between the two balls is smaller than the distance between their centres; the local coupling between metric and measure theoretic structure is paramount in this conceptualisation. Conversely, the Forman curvature is defined in order to admit a generalisation of the Bochner method and applies to a special class of CW-complexes; it is thus more attuned to global (topological) properties of the space. This has concrete consequences for networks. In particular, for unweighted graphs $G$, the Forman curvature of an edge $uv\in E(G)$ is simply
	\begin{align}
	F(uv) &=-2d_ud_v\left(1-\frac{1}{d_u}-\frac{1}{d_v}\right)
	\end{align}
	where $d_u$ and $d_v$ are the degrees of $u$ and $v$ respectively. Whenever $d_u,\:d_v\geq 2$ this is (up to the factor of $d_ud_v$) the Ollivier curvature of the edge $uv$ under the assumption that the graph $G$ is locally tree-like, i.e. has girth at least $6$ \cite{ChoPaeng_RicCurvCol,JostLiu_RicciCurv}. The Forman curvature thus typically fails to capture any information about local clustering in the network, one of the key properties that the Ollivier curvature measures well \cite{JostLiu_RicciCurv,KellyEtAl, SamalEtAl,  SiaEtAl_CommunityDetection}.
	
	Several classes of (locally finite, simple, unweighted) graph \cite{BhattacharyaMukherjee, ChoPaeng_RicCurvCol, JostLiu_RicciCurv, KellyEtAl, LoiselRomon_RicciCurvPolySurf} admit \textit{a priori} combinatorial expressions for the Ollivier curvature, allowing for analytical analysis and improved computational efficiency; in particular, perhaps the most general results are claimed by Bhattacharya and Mukherjee \cite{BhattacharyaMukherjee} who present expressions for the Ollivier curvature in arbitrary bipartite graphs and graphs of girth at least 5. We provide counterexamples to these claims in section \ref{section: Counterexample} below and identify the oversight in the argument of Bhattacharya and Mukherjee that allows these counterexamples to exist. Despite this oversight, the methods employed by Bhattacharya and Mukherjee do allow for some insight to be gained about the Ollivier curvature in general; in particular we show that in any graph the Ollivier curvature can be obtained by (parallelly) solving a family of reduced linear programmes which in principle might lead to improved numerical evaluation times. From the perspective of exact analysis, however, this result gives strong heuristic indications that no general combinatorial expression for the Ollivier curvature in arbitrary graphs exists, since each of the reduced linear programmes can be arbitrarily complex. It is thus desirable to introduce constraints under which the reduced linear programmes may be entirely solved \textit{a priori}; we identify such a condition and consequently obtain a minor generalisation of the expression given in Ref. \cite{KellyEtAl}; this is of potential physical interest in relation to the introduction of matter to the quantum gravity model of the preceding reference.
	
	Before continuing we shall introduce the notation we shall use for the rest of this text:
	\begin{notation}
		$G$ will denote an arbitrary graph and $uv\in E(G)$ will be an edge of $G$. $N(u)$ and $N(v)$ will denote the sets of neighbours of $u$ and $v$ respectively while $d_u\coloneqq |N(u)|$ and $d_v\coloneqq |N(v)|$ respectively. This notation generalises to arbitrary vertices of $G$. It will also be convenient to define $m_w\coloneqq 1/d_w$, $w\in \set{u,v}$. We now assume $(a,b)\in \set{(u,v),(v,u)}$ for the rest of this definition:
		\begin{enumerate}
			\item $N^v(u)\coloneqq N(u)/\set{v}$ and $N^u(v)\coloneqq N(v)/\set{u}$.
			\item $\triangle(uv)\coloneqq N(u)\cap N(v)$. $\triangle_{uv}\coloneqq |\triangle(uv)|$.
			\item $\square(a)\subseteq N^b(a)$ is the set of neighbours of $a$ that lie on a square supported by $uv$ but not on a triangle supported by $uv$. We let $\square_a\coloneqq |\square(a)|$.
			\item $\pentagon(a)\subseteq N^b(a)$ denotes the set of neighbours of $a$ that lie on a pentagon supported by $uv$ but not on either a square or a triangle supported by $uv$. Also $\pentagon(u,v)\coloneqq \set{a\in G: \rho(a,u)=2\text{ and }\rho(a,v)=2}$. Note that both elements of $\pentagon(u)$ and elements of $\pentagon(v)$ necessarily neighbour elements of $\pentagon(u,v)$. We let $\pentagon_a\coloneqq |\pentagon(a)|$.
			\item We define $\text{Fr}(a)\coloneqq N^b(a)/(\triangle(uv)\cup \square(a)\cup \pentagon(a))$. These are the neighbours of $a$ that do not lie on any short ($3$, $4$ or $5$) cycles supported by $uv$. We denote $n_a\coloneqq |\text{Fr}(a)|$.
			\item $R\coloneqq \triangle(uv)\cup \square(u)\cup \square(v)\cup \pentagon(u)\cup \pentagon(v)\cup \pentagon(u,v)$. The induced subgraph of $G$ specified by $R$ will consist of $K$ connected components $R_1,...,R_K$. We let $\triangle^k(u,v)\coloneqq R_k\cap \triangle(u,v)$, $\square^k(a)\coloneqq  R_k\cap \square(a)$ and $\pentagon^k(a)\coloneqq R_k\cap \pentagon(a)$ for every $k\in \set{1,...,K}$. Then $\triangle_{uv}^k\coloneqq |\triangle^k(u,v)|$, $\square^k_a\coloneqq |\square^k(a)|$ and $\pentagon^k_a\coloneqq |\pentagon^k(a)|$.
		\end{enumerate}
		The \textit{core neighbourhood} of $uv$ is defined as the set $C(u,v)=N(u)\cup N(v)\cup \pentagon(u,v)$; the significance of this set will become apparent shortly. We also write $\alpha\lor \beta\coloneqq \max(\alpha,\beta)$, $\alpha\land \beta\coloneqq \min(\alpha,\beta)$ and $[\alpha]_+\coloneqq \alpha\lor 0$ for all $\alpha,\:\beta\in \R$. For any $\alpha,\:\beta\in \R$, $\theta_{\alpha,\beta}$ is defined such that $\theta_{\alpha,\beta}=1$ iff $\alpha\geq \beta$ and $0$ otherwise. Finally for any vertices $u,\:v\in G$ we let
		\begin{align}
		u\lor v\coloneqq \left\{\begin{array}{rl}
		u, & m_u\geq m_v\\
		v, & m_v>m_u
		\end{array}\right. && u\land v\coloneqq \left\{\begin{array}{rl}
		u, & m_u\leq m_v\\
		v, & m_v<m_u
		\end{array}\right..
		\end{align}
	\end{notation}
	\section{The Method of Bhattacharya and Mukherjee and its Limitations}\label{section: Counterexample}
	Bhattacharya and Mukherjee claim to have derived exact combinatorial expressions for the Ollivier curvature of any edge in a graph $G$ whenever $G$ is either bipartite or has girth greater than $4$ \cite{BhattacharyaMukherjee}. To assess their claims we shall briefly introduce the Ollivier curvature; our presentation is very mercenary and the reader is directed to Refs. \cite{Ollivier_RCMCMS,JostLiu_RicciCurv,BhattacharyaMukherjee,Villani_OptimalTransport} for a more complete introduction to these ideas (especially in the discrete context). Let $G$ be a (simple, locally finite, unweighted) graph equipped with its uniform random walk and let $u$ and $v$ be two vertices of $G$; then (in the Kantorovitch dual formation) the \textit{Wasserstein distance} between $u$ and $v$ is given:
	\begin{align}\label{equation: WassersteinDistance}
	W(u,v)\coloneqq \sup_{x\in \mathcal{L}(G,\R)}W_{uv}(x) && W_{uv}(x)\coloneqq \left(m_u\sum_{a\in N(u)}x(a)-m_v\sum_{a\in N(v)}x(a)\right)
	\end{align}
	where the quantity $W_{uv}(x)$ is defined for every map $x:G\rightarrow \R$ and is called the \textit{profit} or \textit{transport profit} of $x$. $\mathcal{L}(G,\R)$ is the set of all Lipschitz continuous maps $x:G\rightarrow \R$, i.e. maps such that $|x(a)-x(b)|\leq \rho(a,b)$ for any $a,\:b\in G$, where $\rho$ is the standard geodesic graph distance. The Ollivier curvature is then simply defined:
	\begin{align}
	\kappa(u,v)&\coloneqq 1-\frac{W(u,v)}{\rho(u,v)}.
	\end{align}
	While this definition is valid for arbitrary pairs of points, we shall henceforth specialise to the case where $uv$ form an edge of $G$.
	
	We note that the Ollivier curvature of an edge $uv\in E(G)$ is entirely determined in the core neighbourhood $C(u,v)$ of that edge. In particular, by construction $C(u,v)$ essentially consists of the $3$, $4$ and $5$-cycles supported on $uv$ as well as any remaining neighbours of $u$ and $v$. To see this first note that $W_{uv}(x)=W_{uv}(x')$ for any $x,\:x':G\rightarrow \R$ such that $x|N(u)\cup N(v)=x'|N(u)\cup N(v)$ trivially by equation \ref{equation: WassersteinDistance}. Moreover, since for any $(a,b)\in N^v(u)\times N^u(v)$  we have a $3$-path $auvb$, the constraints are entirely preserved if we consider any subgraph which contains all $2$-paths between elements of $N(u)$ and $N(v)$. This implies that it is sufficient to optimise over Lipschitz maps $x:C(u,v)\rightarrow \R$. Note that we are implicitly assuming that every Lipschitz continuous map over $\R$ extends to Lipschitz continuous map on $G$; this assumption has been demonstrated explicitly (c.f. lemma 2.1 of Ref. \cite{BhattacharyaMukherjee}) but is in fact superfluous if the Ollivier curvature is defined in terms of couplings as would be natural in a more complete presentation \cite{KellyEtAl}.
	
	We now turn to the results of Bhattacharya and Mukherjee. Their claims may be summarised as follows:
	\begin{enumerate}
		\item (Theorem 3.1) Let $G$ be a bipartite graph and let $uv$ be an edge of $G$. Theorem 3.1 of Ref. \cite{BhattacharyaMukherjee} claims that  $W(u,v)=W_{BM}(u,v)$ where we define:
		\begin{align}\label{equation: Theorem3.1}
		W_{BM}(u,v)\coloneqq 1+2\left[1-m_u-m_v-m_u\square_u-m_v\square_v +\sum_{k=1}^K(m_u\square^k_u)\lor(m_v\square^k_v)\right]_+.
		\end{align}
		\item (Theorem 3.3) Let $G$ be a graph with girth greater than $4$. Then for every edge $uv\in E(G)$, we define:
		\begin{align}
		W_{BM}(u,v)=1+\left[1-m_u-m_v\right]_++\left[1-m_u-m_v-m_u\pentagon_u-m_v\pentagon_v +\sum_{k=1}^K(m_u\pentagon^k_u)\lor(m_v\pentagon^k_v)\right]_+.
		\end{align}
		Theorem 3.3 of Ref. \cite{BhattacharyaMukherjee} suggests that $W_BM(u,v)=W(u,v)$.
	\end{enumerate}
	\begin{figure}
		\centering
			\begin{subfigure}{0.45\textwidth}
				\centering
				\includegraphics[height=0.45\textwidth]{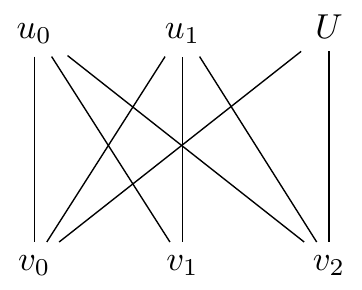}
				\subcaption{Counterexample to Theorem 3.1.}\label{figure1a}
			\end{subfigure}
			\begin{subfigure}{0.45\textwidth}
				\centering
				\includegraphics[height=0.45\textwidth]{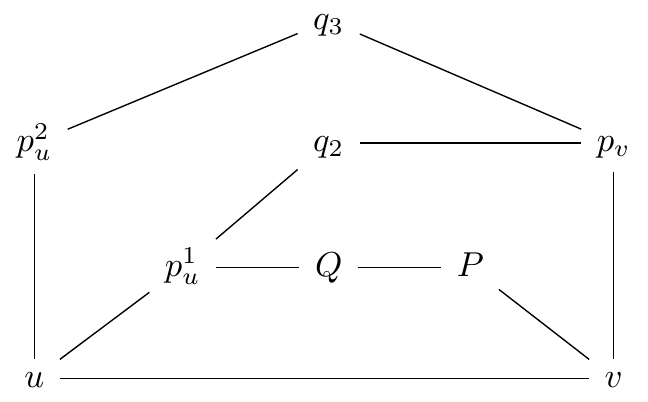}
				\subcaption{Counterexample to Theorem 3.3.}\label{figure1b}
			\end{subfigure}
			\caption{Counterexamples to theorems of Bhattacharya and Mukherjee \cite{BhattacharyaMukherjee} on exact expressions for the Ollivier curvature.}
	\end{figure}
	To demonstrate that $W\neq W_{BM}$ it is sufficient to give a graph $G$ and a Lipshitz function $x:G\rightarrow \R$ such that $W_{uv}(x)>W_{BM}(u,v)$.
	\begin{counterexample}\label{counterexample1}[Theorem 3.1]
		Consider the bipartite graph in figure \ref{figure1a}, where $U$ is a family of vertices such that $N(u)=\set{v_0,v_2}$ for each $u\in U$. By equation \ref{equation: Theorem3.1} we have $W_{BM}(u_0,v_0)=1$ for any nonvoid set $U$; however the mapping
		\begin{align}
		x(a)&=\left\{\begin{array}{rl}
		1, & a = v_1,\\
		0, & a\in \set{u_0,u_1}\\
		-1, & a\in \set{v_0,v_2}\\
		-2, & a\in U
		\end{array}\right.
		\end{align}
		is a Lipschitz function and has profit 
		\begin{align}
		W_{u_0v_0}(x)=\frac{5|U|-2}{3|U|+6}
		\end{align}
		and $W_{u_0v_0}(x)>W_{BM}(u_0,v_0)$ for any $|U|>4$.
	\end{counterexample}
	\begin{proof}
		The graph manifestly has the bipartition $V_1=\set{u_0,u_1}\cup U$ and $V_2=\set{v_0,v_1,v_2}$. Clearly $d_{u_0}=3$ and $d_{v_0}=|U|+2$ while $\square(u_0)=\set{v_1,v_2}$ and $\square(v_0)=\set{u_1}\cup U$ while $R$ is a single connected component since $\set{u_1}\cup U\in N(v_2)$ and $v_1,\:v_2\in N(u_1)$. Given this we have
		\begin{align}
		W_{BM}(u_0,v_0)&=1+2\left[1-\frac{1+2}{3}-\frac{1+(|U|+1)}{|U|+2}+\left(\frac{2}{3}\right)\lor\left(\frac{|U|+1}{|U|+2}\right)\right]_+=1+2\left[\left(\frac{2}{3}\right)\lor\left(\frac{|U|+1}{|U|+2}\right)-1\right]_+=1\nonumber.
		\end{align}
		To note that $x$ is Lipschitz continuous it is sufficient to recognise that $U\cap( N(u_0)\cup N(u_1)\cup N(v_1))=\emptyset$ and $\set{v_0,v_2}\cap N(v_1)=\emptyset$. Then $x$ has the transport profit
		\begin{align}
		W_{u_0v_0}(x)&=\frac{1}{3}(-1+1-1)-\frac{1}{|U|+2}(0+0-2|U|)=\frac{2|U|}{|U|+2}-\frac{1}{3}=\frac{5|U|-2}{3|U|+6}\nonumber
		\end{align}
		as required. It is then easily verified that $(5|U|-2)/(3|U|+6)>1\text{ iff }|U|>4$ and so any such graph provides a counterexample to the expression from Bhattacharya and Mukherjee.
	\end{proof}
	\begin{counterexample}[Theorem 3.3]
		Consider the graph in figure \ref{figure1b}, where $Q$ and $P$ denote families of vertices such that each $q\in Q$ is incident two exactly two edges $p_u^1q$ and $qp_q$ with $p_q\in P$, and each $p\in P$ is also incident to exactly two edges $q_pp$ and $pv$ where $q_p\in Q$. If $|P|=6$, equation \ref{equation: Theorem3.1} gives $W_{BM}(u_0,v_0)=37/24$; however the mapping
		\begin{align}
		x(a)&=\left\{\begin{array}{rl}
		1, & a=p_u^2\\
		0, & a\in \set{u,p_u^1,q_3}\\
		-1, & a\in \set{v,p_v,q_2}\cup Q\\
		-2, & a\in P
		\end{array}\right.
		\end{align}
		is a Lipschitz function and has profit $13/8>37/24$, i.e. $W_{uv}(x)>W_{BM}(u,v)$ which contradicts the claim that $W_{BM}=W$.
	\end{counterexample}
	\begin{proof}
		The graph is manifestly of girth $5$ by construction, while $\pentagon_u=2$, $\pentagon_v=|P|+1=7$, $d_u=3$ and $d_v=8$. Then:
		\begin{align}
		W_{BM}(u,v)&=1+\left[1-\frac{1}{3}-\frac{1}{8}\right]_++\left[1-\frac{1}{3}-\frac{1}{8}-\frac{2}{3}-\frac{7}{8}+\left(\frac{2}{3}\right)\lor\left(\frac{7}{8}\right)\right]_+\nonumber\\
		&=\frac{37}{24}\nonumber.
		\end{align}
		$x$ is a Lipschitz function because 
		\begin{align}
		\rho(p_u^2,v)=\rho(p_u^2,p_v)=\rho(u,p)=\rho(p_u^1,p)=2 && \rho(p_u^2,q_3)=\rho(p_u^2,q)=\rho(q_3,p)=3\nonumber
		\end{align}
		for all $p\in P$ and $q\in Q$. It has the transport profit
		\begin{align}
		W_{uv}(x)&=\frac{1}{3}(1+0-1)-\frac{1}{8}(0-1-2|P|)=\frac{13}{8}=\frac{39}{24}\nonumber
		\end{align}
		which is greater than $W_{BM}(u,v)$ as required.
	\end{proof}
	
	The essence of the method of Bhattacharya and Mukherjee is to utilise the fact that the Wasserstein distance is defined by a linear programme with \textit{integral} solutions. We first reformulate the Wasserstein distance in terms of linear optimisation theory \cite{BhattacharyaMukherjee}: let $n=|N(u)\cup N(v)|$, and let us index the standard basis of $\R^n$ by $N(u)\cup N(v)$ ordered in the block form
	\begin{align}
	[u,v,\triangle(u,v),N(u)/\triangle(u,v),N(v)/\triangle(u,v)]\nonumber.
	\end{align}
	Then we may identify vectors $\textbf{x}=(x_a)\in \R^n$ with mappings $x:G\rightarrow \R$ such that $x_a=x(a)$ for each $a\in N(u)\cup N(v)$. The graph $G$ defines a symmetric digraph $D(G)$ in which any edge $ab$ of $G$ specifies two directed edges $(a,b)$ and $(b,a)$ in $D(G)$; the incidence matrix of a digraph with $E$ edges and $N$ vertices is an $E\times N$ dimensional matrix such that the entry $(e,v)$ takes on the value $1$ if the edge $e$ enters the vertex $v$, $-1$ if it leaves $v$ and $0$ if it is not incident with $v$. Thus the Lipschitz continuous criterion on $x:G\rightarrow \R$ may be reformulated in terms of the following \textit{linear} constraint:
	\begin{align}\label{equation: LinearConstrain}
	M\textbf{x}\leq \mathbb{1}
	\end{align}
	where $\mathbb{1}$ is the $2n$-dimensional column vector with entries all equal to $1$. Now defining
	\begin{align}\label{equation: CostFunction}
	\textbf{w}\coloneqq \begin{bmatrix}
	m_v, & m_u, &	m_u-m_v, & m_u, & m_v
	\end{bmatrix}^T
	\end{align}
	immediately ensures that $\textbf{w}^T\textbf{x}=W_{uv}(x)$ so we have a linear programme in the canonical form; note that $W_{uv}(x)=W_{uv}(x+c)$ for any constant $c\in \R$ so we may assume that $0\leq \textbf{x}$ without loss of generality as long as the programme is bounded (as is the case).
	
	We can now rewrite lemma 2.2 of Ref. \cite{BhattacharyaMukherjee}:
	\begin{theorem}
		The linear programme defined by $\max d_ud_v\textbf{w}^T\textbf{x}$ where $\textbf{w}$ is given by equation \ref{equation: CostFunction} subject to the constraints \ref{equation: LinearConstrain} and $x_u=0$ is integral, i.e. we may assume that an optimal solution $\textbf{x}\in \Z^n$ without loss of generality. 
	\end{theorem}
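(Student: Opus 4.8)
The plan is to recognise the constraint matrix as \emph{totally unimodular} with integral right-hand side, from which integrality follows by the standard theory of integral polyhedra.

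First I would pin down the form of $M$. After the reduction to the core neighbourhood $C(u,v)$ made above, every inequality on $\mathbf{x}$ reads $x(a)-x(b)\leq\rho(a,b)$, so each row of $M$ is a difference $e_a-e_b$ of standard basis vectors indexed by $N(u)\cup N(v)$, with the matching right-hand side entry the geodesic distance $\rho(a,b)\in\Z$ (here $1$, whence $M\mathbf{x}\leq\mathbb{1}$). But a matrix each of whose rows has the form $e_a-e_b$ is exactly the incidence matrix of a directed multigraph (one arc $b\to a$ per row), and incidence matrices of digraphs are totally unimodular \cite{Schrijver_LinearProgramming}; hence $M$ is totally unimodular and $\mathbb{1}$ is integral.

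Next I would eliminate the variable $x_u$ using the constraint $x_u=0$: this leaves all right-hand sides unchanged and replaces $M$ by the submatrix $M'$ obtained by deleting the column indexed by $u$ (still totally unimodular, as submatrices of totally unimodular matrices are), and $\mathbf{w}$ by the corresponding truncation $\mathbf{w}'$. The resulting polyhedron $P=\{\mathbf{x}':M'\mathbf{x}'\leq\mathbb{1}\}$ is bounded --- every $a\in N(u)\cup N(v)$ satisfies $|x(a)|=|x(a)-x(u)|\leq\rho(a,u)<\infty$ --- and pointed, because the constraint structure on $N(u)\cup N(v)$ is connected (each such vertex shares a constraint with $u$ or with $v$, and $u,v$ share one), so $M'$ has full column rank. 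Hence $P$ is a nonempty polytope.

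Finally, the linear functional $d_ud_v(\mathbf{w}')^T\mathbf{x}'$ attains its maximum over $P$ at a vertex $\mathbf{x}'$, which is the unique solution of $M'_B\mathbf{x}'=\mathbb{1}_B$ for some invertible square submatrix $M'_B$ of $M'$; since $\det M'_B=\pm1$ by total unimodularity, Cramer's rule gives $\mathbf{x}'\in\Z^{n-1}$, and reinstating $x_u=0$ yields an integral optimal $\mathbf{x}\in\Z^n$. The positive rescaling by $d_ud_v$ plays no role here (it does, however, make the optimal value itself an integer, since $d_ud_v\mathbf{w}\in\Z^n$). The one step needing genuine care is the first: one must check that the passage to $C(u,v)$ really does reduce every Lipschitz constraint to the elementary ``arc--vertex'' form $x(a)-x(b)\leq\rho(a,b)$, so that $M$ is honestly a digraph incidence matrix and not merely some integer matrix; granting that, total unimodularity --- and hence the theorem --- is immediate, while the remaining ingredients (boundedness, pointedness, existence of a vertex optimum, Cramer's rule) are entirely routine.
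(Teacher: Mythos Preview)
Your proposal is correct and follows essentially the same route as the paper's own (two-line) proof: both rest on recognising $M$ as a digraph incidence matrix, invoking the classical fact that such matrices are totally unimodular, and then appealing to the standard theorem that a linear programme with totally unimodular constraint matrix and integral right-hand side has an integral optimum. Your version is simply more explicit about the auxiliary ingredients (boundedness and pointedness of the feasible region, existence of a vertex optimum, Cramer's rule) that the paper leaves to the cited reference.
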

	\begin{proof}
		The proof uses some standard ideas of integer linear programming: the incidence matrix of every digraph is \textit{totally unimodular} while every linear programme with a totally unimodular constraint matrix (and integral inputs) is integral \cite{Schrijver_LinearProgramming}. Note that the constraint $x_u=0$ follows from the fact that $W_{uv}(x)=W_{uv}(x+c)$ for all $c\in \R$ as mentioned previously.
	\end{proof}
	\begin{corollary}\label{corollary: WassDist}
		Fix an edge $uv\in E(G)$ and let $\mathcal{L}_{uv}^\alpha\coloneqq \set{x\in \mathcal{L}(G,\R):x(u)=0\text{ and }x(v)=\alpha}$. If we let $W_{\alpha}(u,v)\coloneqq \sup_{x\in \mathcal{L}_{uv}^\alpha}W_{uv}(x)$ for each $\alpha\in \set{-1,0,1}$, then $W(u,v)=W_{-1}(u,v)\lor W_0(u,v)\lor W_{1}(u,v).$ 
	\end{corollary}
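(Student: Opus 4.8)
The plan is to deduce the corollary immediately from the integrality theorem above, once we observe that the Lipschitz constraint across the edge $uv$ pins down $x(v)$ to a three-element set. First I would record the easy direction: each set $\mathcal{L}_{uv}^{\alpha}$ is contained in $\mathcal{L}(G,\R)$, so $W_{\alpha}(u,v)\le W(u,v)$ for every $\alpha\in\set{-1,0,1}$ and hence $W_{-1}(u,v)\lor W_0(u,v)\lor W_1(u,v)\le W(u,v)$; only the reverse inequality needs an argument. It is also worth noting in passing that the three sets are nonempty --- the zero map lies in $\mathcal{L}_{uv}^0$, while $a\mapsto\rho(u,a)$ and $a\mapsto-\rho(u,a)$ lie in $\mathcal{L}_{uv}^{1}$ and $\mathcal{L}_{uv}^{-1}$ respectively --- so the three quantities $W_{\alpha}(u,v)$ are genuinely well defined.

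For the reverse inequality I would translate $W(u,v)$ into the linear programme discussed above: by the translation invariance $W_{uv}(x)=W_{uv}(x+c)$ already used, $W(u,v)$ equals the optimal value of $\max\,\textbf{w}^T\textbf{x}$ subject to $M\textbf{x}\le\mathbb{1}$ and $x_u=0$, which is precisely the programme appearing in the integrality theorem (the extra factor $d_ud_v$ rescales the objective but not the optimiser). Since this programme is bounded, its optimum is attained, and by the integrality theorem it is attained at some $\textbf{x}^{*}\in\Z^n$; read back as a map on $G$ --- using the extension property recorded in the discussion of the core neighbourhood $C(u,v)$ --- this gives $x^{*}\in\mathcal{L}(G,\R)$ with $x^{*}(u)=0$ and $W_{uv}(x^{*})=W(u,v)$. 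Because $uv\in E(G)$ we have $\rho(u,v)=1$, so the Lipschitz condition forces $|x^{*}(v)-x^{*}(u)|\le 1$; combined with $x^{*}(v)\in\Z$ this yields $x^{*}(v)\in\set{-1,0,1}$. Setting $\alpha=x^{*}(v)$ we obtain $x^{*}\in\mathcal{L}_{uv}^{\alpha}$ and therefore $W(u,v)=W_{uv}(x^{*})\le W_{\alpha}(u,v)\le W_{-1}(u,v)\lor W_0(u,v)\lor W_1(u,v)$, which is the reverse inequality.

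The step I expect to require the most care is the passage between the ``supremum over Lipschitz maps on $G$'' form of $W(u,v)$ and the finite-dimensional linear programme to which integrality applies: one must check that fixing $x_u=0$ loses nothing (translation invariance), that the programme is genuinely bounded so that an integral optimum exists and the integrality theorem has content, and that an integral optimum of the programme extends back to a Lipschitz map on all of $G$ realising the same profit. Each of these points is either stated or used in the material above, so once they are assembled the corollary reduces to the elementary observation that an integer in $[-1,1]$ lies in $\set{-1,0,1}$, and I do not anticipate any further difficulty.
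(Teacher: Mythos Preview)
Your argument is correct and is precisely the intended one: the paper states this as an immediate corollary of the integrality theorem with no separate proof, and your write-up simply makes explicit the evident deduction that an integral optimiser with $x^{*}(u)=0$ must have $x^{*}(v)\in\set{-1,0,1}$ by the Lipschitz constraint on the edge $uv$. The care you take over boundedness, translation invariance, and extension from $N(u)\cup N(v)$ back to $G$ matches the surrounding discussion in the paper, so nothing further is needed.
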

	It is thus sufficient to maximise over each $W_\alpha$, $\alpha\in \set{-1,0,1}$, separately and take the maximum. Progress can be made in this direction if we note that by restricting to the calculation $W_\alpha$, we impose stricter bounds on the values of $x(a)$, $a\in N(u)\cup N(v)/\set{u,v}$. These lead to bounds on the values of $W_{uv}(x)$, $x\in \mathcal{L}_{uv}^\alpha$, so if a Lipschitz continuous map can be found that realises this bound then linear programme has been solved. For instance:
	\begin{proposition}\label{proposition: W1}
		$W_1(u,v)=1-\triangle_{uv}m_u\land m_v$ for any edge $uv\in E(G)$.
	\end{proposition}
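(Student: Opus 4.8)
The plan is to prove the identity by sandwiching $W_1(u,v)$ between an upper bound and the profit of an explicit feasible function, both equal to $1-\triangle_{uv}\,(m_u\land m_v)$; throughout, $x\in\mathcal{L}_{uv}^1$ means $x(u)=0$ and $x(v)=1$. For the upper bound I would first record the box constraints that the Lipschitz condition forces once the values at $u$ and $v$ are pinned: from $|x(a)-x(u)|\le\rho(a,u)=1$ we get $x(a)\le 1$ for every $a\in N^v(u)$, from $|x(b)-x(v)|\le\rho(b,v)=1$ we get $x(b)\ge 0$ for every $b\in N^u(v)$, and intersecting these gives $0\le x(c)\le 1$ for $c\in\triangle(uv)$. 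I would then rewrite the profit by splitting $N(u)\cup N(v)$ into the disjoint blocks $\{u\}$, $\{v\}$, $\triangle(uv)$, $N^v(u)\setminus\triangle(uv)$, $N^u(v)\setminus\triangle(uv)$: after substituting $x(u)=0$ and $x(v)=1$, the profit equals $m_u$, plus $m_u$ times the sum over the pure $u$-neighbours, minus $m_v$ times the sum over the pure $v$-neighbours, plus $(m_u-m_v)$ times the sum over $\triangle(uv)$ (each common neighbour contributing to both the $m_u$-sum and the $m_v$-sum). Bounding each block against its box constraint --- pure $u$-neighbours at $1$, pure $v$-neighbours at $0$, and each common neighbour contributing at most $[m_u-m_v]_+=m_u-(m_u\land m_v)$ --- and then using $m_ud_u=1$ to cancel the $\pm m_u\triangle_{uv}$ terms collapses the bound to exactly $1-\triangle_{uv}\,(m_u\land m_v)$.

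For the reverse inequality I would exhibit the candidate $x$ on $N(u)\cup N(v)$ defined by $x(u)=0$, $x(v)=1$, $x\equiv 1$ on $N^v(u)\setminus\triangle(uv)$, $x\equiv 0$ on $N^u(v)\setminus\triangle(uv)$, and $x\equiv\theta_{m_u,m_v}$ on $\triangle(uv)$. The decisive observation is that this $x$ is $\{0,1\}$-valued, so $|x(p)-x(q)|\le 1\le\rho(p,q)$ holds for any distinct $p,q$; hence $x$ is automatically $1$-Lipschitz on $N(u)\cup N(v)$ and, by the extension property noted just before Corollary \ref{corollary: WassDist}, extends to an element of $\mathcal{L}_{uv}^1$ with the same profit. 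A direct evaluation of $W_{uv}(x)$ then returns $1-\triangle_{uv}\,(m_u\land m_v)$, matching the upper bound and completing the proof.

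I do not expect a genuine obstacle here: the substance is simply that the natural box-relaxation of the linear programme for $W_1$ is tight, because its extremal $\{0,1\}$-valued vertex already satisfies the full Lipschitz system. The only places needing care are bookkeeping --- checking that the five blocks above really partition $N(u)\cup N(v)$, and correctly tracking that common neighbours carry the \emph{net} coefficient $m_u-m_v$, which is precisely what turns their contribution into $m_u\land m_v$ rather than a sum. Conceptually, $\alpha=1$ is the easy case because $x(v)=1=\rho(u,v)$ is tight, squeezing every neighbour of $u$ and of $v$ into a window of length one and leaving no room for squares or pentagons to enter; those will only contribute in the $\alpha=0$ and $\alpha=-1$ cases treated afterwards.
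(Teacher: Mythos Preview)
Your proposal is correct and follows essentially the same argument as the paper: both derive the same box constraints from $x(u)=0$, $x(v)=1$, split $W_{uv}(x)$ over the blocks $\{u\},\{v\},\triangle(uv),N^v(u)\setminus\triangle(uv),N^u(v)\setminus\triangle(uv)$, bound each block to obtain $1-\triangle_{uv}(m_u\land m_v)$, and then exhibit the identical $\{0,1\}$-valued optimiser whose Lipschitz property is immediate from its two-valued range.
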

	\begin{proof}
		We wish to derive the \textit{a priori} inequalities on elements of $x(a)$, $a\in N(u)\cup N(v)$, that arise from Lipschitz continuity. We may assume $x(u)=0$ without loss of generality so $-1\leq f(a)\leq 1$ for all $a\in N(u)$ and all $x\in \mathcal{L}^\alpha_{uv}$. We are also only interested in the case $x(v)=1$ i.e. $0\leq x(b)\leq 2$ for all $b\in N(v)$. However we must also note that if $a\in \triangle(uv)$ we have the improved bounds $0\leq f(a)\leq 1$. Thus if we write
		\begin{align}
		W_{uv}(x)&=m_u+m_u\sum_{a\in N^v(u)/\triangle(uv)}x(a)-m_v\sum_{b\in N^u(v)/\triangle(uv)}x(b)+\left(m_u-m_v\right)\sum_{a\in \triangle(uv)}x(a)\nonumber
		\end{align}
		we have
		\begin{align}
		W_{uv}(x)&\leq m_u\left(1+(d_u-1-\triangle_{uv})\right)+\left[\triangle_{uv}\left(m_u-m_v\right)\right]_+=1-m_u\land m_v\triangle_{uv}\nonumber
		\end{align}
		where we have used the fact that $-m_u+\left[m_u-m_v\right]_+$ is equal to $-m_u$ if $d_u\geq d_v$ and is equal to $-m_v$ otherwise. The RHS is the transport profit of the map
		\begin{align}
		x(a)&=\left\{\begin{array}{rl}
		1, & a\in N(u)/\triangle (uv)\\
		0, & a\in N(v)/\triangle(uv)\\
		\theta_{m_u,m_v}, & a\in \triangle(uv)
		\end{array}\right.\nonumber.
		\end{align}
		This is clearly well-defined; it is a Lipschitz function trivially for all graphs because $\text{Im}_x(N(u)\cup N(v))=\set{0,1}$, i.e. $|x(a)-x(b)|\leq 1$ for any $a,\:b\in N(u)\cup N(v)$ while $\rho(a,b)\geq 1$ for all distinct $u,\:v\in N(u)\cup N(v)$.
	\end{proof}
	Unfortunately, for $\alpha\neq 1$ it does not seem to be possible to derive results of similar generality, as we shall argue in the next section. Bhattacharya and Mukherjee argue that by restricting to a single type of cycle per connected component $R_k$ of $R$, it is possible to give a recursive construction of the optimal mapping $x:G\rightarrow \R$. We illustrate the problem for the case $\alpha=0$ when $G$ is bipartite but the basic error is the same in all other cases. For simplicity we also assume that $R$ consists of a single connected component and that $\text{Fr}(u)=\text{Fr}(v)=\emptyset$. Given these assumptions we have
	\begin{align}
	W_{uv}(x)=m_u\sum_{a\in \square(u)}x(a)-m_v\sum_{b\in \square(v)}x(b).
	\end{align}
	Given that $a\in N(u)$ and $b\in N(v)$, we have the bounds $-1\leq a,\:b\leq 1$ and so naively we may maximise $W_{uv}(x)$ by choosing $x(a)=1$ and $x(b)=-1$ for all $a\in N(u)$ and all $b\in N(v)$. This however is not Lipshitz continuous since some $x\in \square(u)$ neighbours some $y\in \square(v)$ and $|x(a)-x(b)|=2>\rho(a,b)=1$. As such Lipschitz continuity requires that the two terms are maximised in conjunction. Bhattacharya and Mukherjee assume that an optimal mapping $x:G\rightarrow \R$ is given by a recursive construction which maintains the maximum value at each step: in particular let $a_0\in N(u)\cup N(v)$ and set $x(a_0)$ to the value that maximises $W_{uv}(x)$ for the range of values possible for $a_0$. At the next step assign the maximising values to each of the neighbours of $a_0$ that ensure Lipschitz continuity etc. In this way one ends up either with a mapping $x$ that assigns $1$ to each $R_k\cap N(u)$ and $0$ to each $R_k\cap N(v)$ or alternatively that assigns $0$ to each $R_k\cap N(u)$ and $-1$ to each $R_k\cap N(v)$ (one can ignore problems caused by triangles in this assignment since the graph is bipartite); the two distinct cases arise depending on whether $a_0\in N(u)$ or $a_0\in N(v)$. The counterexamples presented above essentially show that this construction is not in fact optimal under the assumptions made in Ref. \cite{BhattacharyaMukherjee}.
	\section{Linear Programmes for Ollivier Curvature Evaluation}\label{section: LinearProgramme}
	In this section we show that the problem of calculating the Wasserstein distance reduces to the problem of solving a pair of linear programmes for each connected component $R_k$ of $R$; however, in general, $R_k$ can be \textit{any graph} while all the available \textit{a priori} information has already been used in the reduction of the linear programme to this restricted form and it seems likely that there is no combinatorial expression for the Ollivier curvature in arbitrary graphs. Making this statement precise would require a precise notion of a combinatorial expression for the Ollivier curvature.
	
	We begin with two elementary lemmas:
	\begin{lemma}\label{lemma: W0}
		Let $uv$ be an edge in $G$. 
		\begin{enumerate}
			\item For any $x\in \mathcal{L}_{uv}^0$, there is a $y\in \mathcal{L}_{uv}^0$ given by
			\begin{align}
			y(a)&=\left\{\begin{array}{rl}
			1, & a\in \pentagon(u)\cup\text{Fr}(u)\\
			0, & a\in \set{u,v}\cup \pentagon(u,v)\\
			-1, & a\in \pentagon(v)\cup \text{Fr}(v)\\
			f(a), & a\in \triangle(uv)\cup \square(u)\cup \square(v)
			\end{array}\right.
			\end{align}
			such that $W_{uv}(y)\geq W_{uv}(x)$.
			\item Given any mapping $x\in \mathcal{L}_{uv}^0$ such that $x(a)=-1$ for some $a\in \square(u)$, the mapping $y:C(u,v)\rightarrow \Z$ given by $y(b)=x(b)$, $b\neq a$, and $y(a)=0$ is Lipshitz continuous and satisfies $W_{uv}(y)\geq W_{uv}(x)$. Similarly, for any Lipschitz continuous map $x:C(u,v)\rightarrow \Z$ with $x(u)=x(v)=0$ and $x(a)=1$ for some $a\in \square(v)$, there is a mapping $y\in \mathcal{L}_{uv}^0$ such that $W_{uv}(y)\geq W_{uv}(x)$ given by $y(a)=0$ and $y(b)=x(b)$ for all $b\neq a$.
		\end{enumerate} 
	\end{lemma}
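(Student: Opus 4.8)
The plan is to prove each of the two parts by a direct ``rounding/extremising'' argument on the values of $x$, using only Lipschitz continuity and the sign of the coefficient each vertex contributes to $W_{uv}$. Recall from equation \ref{equation: CostFunction} that in $W_{uv}(x)$ a vertex $a\in N^v(u)\setminus\triangle(uv)$ carries coefficient $+m_u$, a vertex $b\in N^u(v)\setminus\triangle(uv)$ carries $-m_v$, and a vertex $a\in\triangle(uv)$ carries $m_u-m_v$; so to increase the profit we want neighbours of $u$ (only) as large as possible and neighbours of $v$ (only) as small as possible, subject to the constraints. Since $x(u)=x(v)=0$, Lipschitz continuity already forces $-1\le x(a)\le 1$ for every $a\in N(u)\cup N(v)$, and $0\le x(a)\le 1$ or $-1\le x(a)\le 0$ need not hold in general --- the point of part~1 is that at the extreme configuration one may take the ``free'' neighbours to their optimal values $\pm1$ and the $\pentagon(u,v)$ vertices to $0$.

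For part~1 I would argue as follows. First, the vertices of $\pentagon(u,v)$ appear in neither $N(u)$ nor $N(v)$ (they are at distance $2$ from both), so they contribute nothing to $W_{uv}(x)$; hence we may freely set $y(a)=0$ for $a\in\pentagon(u,v)$ without changing the profit, provided we check Lipschitz continuity is not violated --- but every neighbour of such an $a$ that lies in $N(u)\cup N(v)$ is in $\pentagon(u)\cup\pentagon(v)$ (by the remark in the Notation that elements of $\pentagon(u),\pentagon(v)$ neighbour elements of $\pentagon(u,v)$), and for those we are about to set $y=\pm1$, so $|y(a)-y(\cdot)|\le 1$ holds. Next, for $a\in\text{Fr}(u)$: by definition $a$ lies on no short cycle supported by $uv$, so $a$ has no neighbour in $N(v)$ (that would create a triangle if $a\in N(v)$ too, or a square if $a$ neighbours some $b\in N^u(v)$, or we would need to recheck pentagons) and no neighbour in $\square(v)\cup\pentagon(v)\cup\pentagon(u,v)$ that is already constrained to be small; carefully, the only already-assigned vertices within distance $1$ of $a$ are other elements of $N(u)$, all being set to values in $\{-1,0,1\}$, together with $u$ itself at $0$; raising $x(a)$ to $1$ only increases $W_{uv}$ (coefficient $+m_u>0$) and cannot break any constraint because every other assigned vertex is $\ge -1$... here one must be slightly careful and argue that $a\in\text{Fr}(u)$ has no assigned neighbour forced to be $<0$; I would spell out that a neighbour of $a$ lying in $\text{Fr}(v)\cup\pentagon(v)$ is impossible since that would put $a$ on a short cycle or force $a\in\pentagon(u,v)$. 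The symmetric statements handle $\pentagon(v)\cup\text{Fr}(v)$ (push to $-1$). The values on $\triangle(uv)\cup\square(u)\cup\square(v)$ are simply kept as $f(a):=x(a)$, so nothing needs proving there. The main obstacle is exactly this bookkeeping: verifying that no element of $\text{Fr}(u)\cup\pentagon(u)$ has an already-assigned neighbour that the new assignment conflicts with, which reduces to unwinding the definitions of $\square,\pentagon,\text{Fr}$ and the $2$-path structure, and that is more delicate than it first appears because one must not introduce a cycle of length $\le 5$ through $uv$.

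For part~2 the argument is shorter and local. Suppose $x\in\mathcal{L}_{uv}^0$ with $x(a)=-1$ for some $a\in\square(u)$; define $y$ by $y(a)=0$, $y(b)=x(b)$ elsewhere. Since $a\in N^v(u)\setminus\triangle(uv)$ its coefficient in $W_{uv}$ is $+m_u>0$, so raising $x(a)=-1$ to $y(a)=0$ strictly increases the profit by $m_u$, giving $W_{uv}(y)\ge W_{uv}(x)$; it remains to check $y$ is Lipschitz. The only inequalities that could fail involve $a$ and a neighbour $c$ of $a$: we need $|y(a)-y(c)|=|x(c)|\le\rho(a,c)$. If $\rho(a,c)\ge 1$ this is automatic whenever $|x(c)|\le 1$, and $|x(c)|\le 1$ holds for every $c\in N(u)\cup N(v)$ because $x(u)=x(v)=0$ and $uc$ or $vc$ is an edge; for $c\notin N(u)\cup N(v)$, $c=u$ gives $|x(u)|=0\le\rho(a,u)=1$, and any other such $c$ lies at distance $\ge 2$ from $a$ (as $a$'s neighbours in $C(u,v)$ are exhausted by $u$ and vertices of $N(u)\cup N(v)$ reachable via the square), so again fine. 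Symmetrically, for $a\in\square(v)$ with $x(a)=1$, lowering to $y(a)=0$ changes the profit by $-(-m_v)\cdot(-1)$... more precisely the coefficient of $a$ is $-m_v<0$ so decreasing $x(a)$ from $1$ to $0$ increases $W_{uv}$ by $m_v>0$, and the Lipschitz check is identical. The one point to handle with care is that in the second half of part~2 the hypothesis is only that $x\colon C(u,v)\to\Z$ is Lipschitz with $x(u)=x(v)=0$, not that $x\in\mathcal{L}_{uv}^0$; but after setting $y(a)=0$ one obtains a map with $y(u)=y(v)=0$ and $|y(b)|\le 1$ on $N(u)\cup N(v)$, and since by the reduction to $C(u,v)$ (established in the text following equation \ref{equation: WassersteinDistance}) any such Lipschitz map on $C(u,v)$ extends to a Lipschitz map on $G$, we indeed get $y\in\mathcal{L}_{uv}^0$, completing the claim.
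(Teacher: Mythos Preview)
Your approach matches the paper's: show $W_{uv}(y)-W_{uv}(x)\ge 0$ from the sign of each coefficient, then verify Lipschitz continuity of $y$. The difference is purely organisational. For Part~1 the paper collects the least possible distance between any two blocks of the core-neighbourhood partition into a single matrix $\mathbb{D}$ and, writing $A:=\triangle(uv)\cup\square(u)\cup\square(v)$, dispatches the Lipschitz check in three strokes: $a,b\in A$ (inherit from $x$), exactly one of $a,b$ in $A$ (bound $|y(a)-y(b)|\le 2\le \mathbb{D}_{ab}\le\rho(a,b)$), and $a,b\notin A$ (same bound). Your ad hoc neighbour-chasing is the same idea carried out less systematically, and it leads you into a slip: it is not true that every neighbour in $N(u)\cup N(v)$ of a vertex $p\in\pentagon(u,v)$ lies in $\pentagon(u)\cup\pentagon(v)$ --- such $p$ may equally neighbour elements of $\triangle(uv)$, $\square(u)$ or $\square(v)$. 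This does not actually break the argument (the value $0$ at $p$ is within $1$ of anything in $\{-1,0,1\}$), but it shows why tabulating block distances up front is less error-prone than reasoning about a few cases. For Part~2 your argument coincides with the paper's; note however that $a\in\square(u)$ \emph{can} have a neighbour $c\in\pentagon(u,v)$ at distance $1$, so your claim that ``any other such $c$ lies at distance $\ge 2$ from $a$'' is false --- the clean way out is that such $c$ contribute nothing to $W_{uv}$ and their values may be adjusted without loss.
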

	\begin{proof}
		\leavevmode
		\begin{enumerate}
			\item It is easy to see that $W_{uv}(y)\geq W_{uv}(x)$:
			\begin{align}
			W_{uv}(y)-W_{uv}(x)&=m_u\sum_{a\in \pentagon(u)\cup\text{Fr}(u)}(1-x(a))+m_v\sum_{b\in \pentagon(v)\cup \text{Fr}(v)}(1+x(b))\nonumber.
			\end{align}
			Since $-1\leq x(a),\:x(b)\leq 1$ for all $a\in N(u)$ and $b\in N(v)$, $(1-x(a))\geq 0$ and $(1+x(b))\geq 0$ for all $a\in N(u)$ and all $b\in N(v)$ and $W_{uv}(y)-W_{uv}(x)\geq 0$. It thus remains to show that $y$ is Lipshitz continuous. This follows if we notice that the (block form) matrix
			\begin{align}\label{equation: DistMat}
			\mathbb{D}\coloneqq \begin{array}{c|cccccc}
			& v & \triangle(u,v) & \square(u) & \pentagon(u) & \text{Fr}(u)  & \pentagon (u,v) \\ \hline 
			u & 1 & 1 & 1 & 1 & 1 & 2\\
			\triangle(u,v) & 1 & 1 & 1 & 2 & 2 & 1\\
			\square(v) & 1 & 1 & 1 & 2 & 3 & 1\\
			\pentagon(v) & 1 & 2 & 2 & 2 & 3 & 1\\
			\text{Fr}(v) & 1 & 2 & 3 & 3 & 3 & 2\\
			\pentagon(u,v) & 2 & 1 & 1 & 1 & 2 & 1
			\end{array}
			\end{align}
			has as the block entry $(i,j)$ the least possible distance between an element of the set labelling row $i$ and a distinct element of the set labelling the column $j$. We may index $\mathbb{D}$ by elements $(a,b)$ rather than blocks $(i,j)$ in which case $\mathbb{D}_{xy}=\mathbb{D}_{ij}$ where $i$ corresponds to the equivalence class of $a$ in the core neighbourhood partition and $j$ corresponds to the equivalence class of $b$ in the core neighbourhood partition. Letting $A\coloneqq \triangle(u,v)\cup \square(u)\cup \square(v)$, we note that if $a,\:b\in A$ then $|y(a)-y(b)|=|x(a)-x(b)|\leq \rho(x,y)$ since $x$ is Lipshitz continuous. If $a\in A$ and $b\notin A$, we have $|y(a)-y(b)|=|x(a)-y(b)|\leq 2\leq \mathbb{D}_{ab}\leq \rho(a,b)$, where the second step follows because of generic bounds on $x(a)$ and $y(b)$, $a\in N(u)$, $b\in N(v)$, and the final step is valid because each entry of $\mathbb{D}$ is the \textit{least} distance between the blocks associated to $a$ and $b$ respectively. By the same arguments we have $|y(a)-y(b)|\leq 2 \leq \mathbb{D}_{xy}\leq \rho(a,b)\nonumber$ for $x,\:y\notin A$ as required.
			\item We consider a mapping mapping $x\in \mathcal{L}_{uv}^0$ such that $x(a)=-1$ for some $a\in \square(u)$. To see that $y$ is Lipschitz continuous it is sufficient to note that since $x$ is Lipschitz continuous and $x(a)=-1$, $x(b)\in \set{0,-1}$ for all $b\in N(a)\cap C(u,v)$ and so $|y(a)-x(b)|=|x(b)|\leq 1=\rho(a,b)$ as required. It is obvious that $W_{uv}(y)-W_{uv}(x)=m_u(0-(-1))=m_u>0$ as required. \textit{Mutatis mutandis}, the same argument proves the statement for $x\in \mathcal{L}_{uv}^{0}$ such that $x(a)=1$ for some $a\in \square(v)$.
		\end{enumerate}
	\end{proof}
	\begin{lemma}\label{lemma: W-1}
		Let $uv$ be an edge in $G$. and consider $x\in \mathcal{L}_{uv}^{-1}$.
		\begin{enumerate}
			\item The mapping
			\begin{align}
			y(a)&=\left\{\begin{array}{rl}
			1, & a\in \text{Fr}(u)\\
			0, & a = u\\
			-1, & a=v\\
			-2, & a\in \text{Fr}(v)\\
			x(a), & a\in R
			\end{array}\right.
			\end{align}
			is Lipschitz continuous and has $W_{uv}(y)\geq W_{uv}(x)$.
			\item Suppose that there is an $a\in \square(u)$ such that $x(a)=-2$. Then the mapping $y:C(u,v)\rightarrow \Z$ defined by $y(a)=-1$ and $y(b)=x(b)$ for $a\neq b$ is Lipschitz continuous and satisfies $W_{uv}(y)\geq W_{uv}(x)$. Similarly if there is an $a\in \square(v)$ such that $x(a)=1$; then the mapping $y:C(u,v)\rightarrow \Z$ defined by $y(a)=0$ and $y(b)=y(b)$ for $a\neq b$ belongs to $\mathcal{L}_{uv}^{-1}$ and satisfies $W_{uv}(y)\geq W_{uv}(x)$.
			\item Let $A\coloneqq \set{a\in \pentagon(u):x(a)\in \set{-1,-2}}$. Then there is a Lipschitz continuous mapping $y:C(u,v)\rightarrow \Z$ such that $y(u)=0$, $y(v)=-1$, and $y(a)=0$ for each $a\in A$ and such that $W_{uv}(y)\geq W_{uv}(x)$. Similarly, if $A\coloneqq \set{a\in \pentagon(v):x(a)\in \set{1,0}}$, then there is a mapping $y:C(u,v)\rightarrow \Z$ given by $y(a)=-1$ that belongs to $\mathcal{L}_{uv}^{-1}$ and satisfies $W_{uv}(y)\geq W_{uv}(x)$.
		\end{enumerate}
	\end{lemma}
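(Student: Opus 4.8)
The plan is to handle each of the three parts by the two-step argument used already in the proof of Lemma~\ref{lemma: W0}. First I would establish $W_{uv}(y)\geq W_{uv}(x)$ by writing $W_{uv}(y)-W_{uv}(x)$ as a sum over the vertices at which $y$ differs from $x$ and checking term by term that each contribution is non-negative; the only inputs needed are the a priori bounds that Lipschitz continuity together with the normalisation $x(u)=0$, $x(v)=-1$ force on the values of $x$, namely $-1\leq x(a)\leq 1$ for $a\in N(u)$, $-2\leq x(b)\leq 0$ for $b\in N(v)$, and the sharper $-1\leq x(c)\leq 0$ for $c\in\triangle(uv)$. Second I would verify that $y$ is Lipschitz continuous by running over pairs of classes in the core-neighbourhood partition, reading off the least inter-class distances from the matrix $\mathbb{D}$ of equation~\ref{equation: DistMat}, and --- for pairs that $\mathbb{D}$ does not separate on its own --- using the fact that $x$ is already Lipschitz to pin down the values that $y$ inherits from $x$.

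For part (1), $y$ agrees with $x$ on $\{u,v\}\cup R$, so the profit difference collapses to $m_u\sum_{a\in\text{Fr}(u)}(1-x(a))+m_v\sum_{b\in\text{Fr}(v)}(2+x(b))$, which is non-negative by the bounds above; for the Lipschitz step it suffices to examine pairs with an endpoint in $\text{Fr}(u)\cup\text{Fr}(v)$, and here I would use that a frontier vertex lies on no $3$-, $4$- or $5$-cycle through $uv$, hence is at distance $1$ from $u$ (resp.\ $v$) but at distance $\geq 2$ from $\triangle(uv)$ and the $\square$-classes and at distance $\geq 3$ from the opposite frontier class, so that --- since $y$ takes values in $\{-2,-1,0,1\}$ --- every difference $|y(a)-y(b)|$ is at most the corresponding entry of $\mathbb{D}$, hence at most $\rho(a,b)$. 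Parts (2) and (3) are purely local: only finitely many values change, each moving one unit in the profit-increasing direction, so the profit change is a manifestly positive multiple of $m_u$ or $m_v$, and Lipschitz continuity reduces to the edges incident to the modified vertices; there the key observation is that if $x(a)$ sits at an extreme of its permitted range then Lipschitz continuity of $x$ already squeezes every neighbour of $a$ into a window of width $1$ about $x(a)$, so moving $x(a)$ by one unit keeps all incident constraints intact. For part (3) this is applied simultaneously to the whole of $A\subseteq\pentagon(u)$, which is legitimate because distinct elements of $A$ share the fixed value at their common neighbour $u$.

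The main obstacle in all three parts is the Lipschitz verification rather than the profit inequality. The delicate pairs are those joining a vertex whose value has been changed to a vertex whose value is inherited unchanged from $x$: for instance, in part (1), a $\pentagon(u)$-vertex adjacent to a modified $\text{Fr}(u)$-vertex, and in parts (2)--(3), a $\pentagon(u,v)$-vertex adjacent to a modified $\square$- or $\pentagon$-vertex --- and $\mathbb{D}$ alone does not rule such adjacencies out (e.g.\ $\pentagon(u,v)$ can be adjacent to $\square(u)$). Controlling these requires pushing the bounds coming from $x$'s Lipschitz continuity and from the combinatorial definitions of the core-neighbourhood classes far enough that the modified values stay compatible with the inherited ones; this is precisely the bookkeeping carried out in the proof of Lemma~\ref{lemma: W0}, and I expect it to carry over here, with the windows on the $N(v)$-side shifted down by one because $x(v)=-1$ rather than $0$.
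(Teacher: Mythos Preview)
Your outline for parts (1) and (2) is essentially the paper's argument: the profit inequality is the same one-line computation, and the Lipschitz check is done by reading off least inter-class distances from the matrix $\mathbb{D}$ together with the a~priori value ranges forced by $x(u)=0$, $x(v)=-1$. That part is fine.

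The gap is in part (3). Your ``window of width $1$'' heuristic --- that when $x(a)$ sits at an extreme of its permitted range, every neighbour is squeezed to within one unit of $x(a)$ --- is what makes part~(2) work, but it does \emph{not} carry over to part~(3). Concretely, take $a\in\pentagon(u)$ with $x(a)=-1$ (the minimum of the $N(u)$ range $[-1,1]$). A neighbour $b$ of $a$ may lie in $\pentagon(u,v)$, whose permitted range is $[-2,1]$; Lipschitz continuity of $x$ then only gives $x(b)\in[-2,0]$, and $x(b)=-2$ is perfectly possible. Setting $y(a)=0$ while keeping $y(b)=x(b)=-2$ produces $|y(a)-y(b)|=2>1=\rho(a,b)$, so the map you describe is not Lipschitz. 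The bounds you propose to ``push far enough'' simply do not close this window, and the bookkeeping of Lemma~\ref{lemma: W0} has no analogue here because that lemma has no pentagon clause.

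What you are missing is that the statement of part~(3) specifies $y$ only on $\{u,v\}\cup A$ and leaves it otherwise free; in particular you may (and must) also alter $y$ on $\pentagon(u,v)$, which costs nothing since vertices of $\pentagon(u,v)$ do not appear in the profit functional $W_{uv}$. The paper's proof does exactly this: whenever $b\in\pentagon(u,v)$ is adjacent to some $a\in A$ and has $x(b)=-2$, it resets $y(b)=-1$. Combined with part~(2) (ruling out $-2$ on $\square(u)$, which is in fact automatic here) and with the observation that any problematic $b\in\pentagon(u)$ already belongs to $A$ and hence is itself being moved to $0$, this exhausts the possible obstructions. Your proposal should incorporate this extra modification of $y$ on $\pentagon(u,v)$; without it the construction in part~(3) fails.
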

	\begin{proof}
		\leavevmode
		\begin{enumerate}
			\item To see that the mapping $y$ given above is Lipschitz continuous it will be useful to compare with the matrix \ref{equation: DistMat}. Now note that if $a,\:b\in R\cup\set{u,v}$ then $|y(a)-y(b)|=|x(a)-x(b)|\leq \rho(a,b)$ since $x$ is Lipschitz continuous. Similarly if $a\in R$ and $b\in C(u,v)/(R\cup \set{u,v})=\text{Fr}(u)\cup\text{Fr}(v)$ then $|y(a)-y(b)|=|x(a)-y(b)|$; if $a\in \triangle(u,v)$ then $x(a)\in \set{-1,0}$ so since $-2\leq y(b)\leq 1$ for all $b\in R$ we have $|y(a)-y(b)|\leq 2=\mathbb{D}_{ab}\leq \rho(a,b)$. Similarly if $a\in R/\triangle(u,v)$ then $-2\leq x(a) \leq 1$ and $-2\leq y(b)\leq 1$ so $|y(a)-y(b)|\leq 3=\mathbb{D}_{ab}=\rho(a,b)$. Now we consider $a,\:b\in \text{Fr}(u)\cup \text{Fr}(v)$. Then $|y(a)-y(b)|=3=\rho(a,b)$ and $y$ is Lipshitz continuous as required. Now we note that
			\begin{align}
			W_{uv}(y)-W_{uv}(x)&=m_u\sum_{a\in \text{Fr}(u)}(1-x(a))+m_v\sum_{b\in \text{Fr}(v)}(2+x(b))\nonumber
			\end{align}
			so since $-2\leq x(a),\:x(b)\leq 1$ for all $a,\:b\in R$ we have $1-x(a)\geq 0$ and $2+x(b)\geq 0$, i.e. $W_{uv}(y)\geq W_{uv}(x)$ as required.
			\item If $x(a)=-2$ then each neighbour $b\in N(a)$ satisfies $b\in \set{-2,-1}$ and $y\in \mathcal{L}_{uv}^{-1}$ trivially. Clearly $W_{uv}(y)-W_{uv}(x)=m_u>0$; similarly arguments ensure the second part of this point.
			\item We prove the first statement: because $x$ is Lipschitz continuous, $y$ is also Lipschitz continuous unless there is some $a\in \pentagon(u)$ with a neighbour $b$ such that $y(b)=-2$; but because $a\in \pentagon(u)$, $b\notin N(v)$, i.e. $b\in \pentagon(u,v)\cup N(u)/\triangle(u,v)$. If $b\in \pentagon(u,v)$ we may assume that $y(b)=-1$ without loss of generality since this ensures that $x$ is Lipschitz continuous and we may reassign the value of points in $\pentagon(u,v)$ without changing the transport profit. Similarly by part (ii) above we may assume that if $b\in \square(u)$ then $x(b)\geq -1$; thus an obstruction $b$ to assuming $y(a)=0$ requires $b\in \pentagon(u)$ and $y(b)=-2$; but by construction no such $b$ exists since we set $y(b)=0$ for any $b\in \pentagon(u)$ such that $x(u)=-2$ and $y$ is a Lipschitz function. Also $W_{uv}(y)\geq W_{uv}(x)$ since the LHS exceeds the right by at least $m_u|A|$. Essentially the same arguments prove the second part of this statement.
		\end{enumerate}
	\end{proof}
	\begin{theorem}\label{theorem: ConnectedComponents}
		Let $G$ be a graph and $uv$ an edge of $G$. For each connected component $R_k$ of $R\subseteq C(u,v)$, let
		\begin{align}
		\textbf{c}_k^0\coloneqq [m_u-m_v,m_u,m_v]^T && \textbf{c}_k^{-}\coloneqq [m_u-m_v,m_u,m_u,m_v,m_v]^T,
		\end{align}
		where the former is a vector of $\mathbb{R}^{n_1}$, $n_1\coloneqq (\triangle_{uv}^k+\square^k_u+\square_v^k)$, and the a latter vector of $\mathbb{R}^{n_2}$, $n_2\coloneqq (n_1+\pentagon_u^k+\pentagon_v^k)$, indexed by the blocks $[\triangle^k(u,v),\square^k(u),\square^k(v)]$ and $[\triangle^k(u,v),\square^k(u), \pentagon^k(u),\square^k(v), \pentagon^k(v)]$ respectively. Also let $M_k^0$ and $M_k^{-}$ be the incidence matrices of the digraphs associated to the induced subgraphs of $G$ on the vertex sets $\triangle^k(u,v)\cup\square^k(u)\cup \square^k(v)$ and $R_k$ respectively. Then if $\textbf{x}_k^0$ and $\textbf{x}_k^{-}$ are optimal solutions to the linear programmes that seek to maximise $(\textbf{c}_k^0)^T\textbf{x}^0$ and $(\textbf{c}_k^{-})^T\textbf{x}^{-}$ subject to the constraints
		\begin{subequations}\label{equation: ReducedLinProgrammeConstraints0}
			\begin{align}
			\begin{bmatrix}\label{equation: ReducedLinProgrammeConstraints0A}
			-1 & 0 & -1
			\end{bmatrix}&\leq (\textbf{x}^0)^T\leq \begin{bmatrix}
			1 & 1 & 0
			\end{bmatrix}\\
			-\mathbb{1}&\leq M_k^0\textbf{x}^0\leq \mathbb{1}\label{equation: ReducedLinProgrammeConstraints0B}
			\end{align}
		\end{subequations}
		and 
		\begin{subequations}\label{equation: ReducedLinProgrammeConstraints1}
			\begin{align}
			\begin{bmatrix}\label{equation: ReducedLinProgrammeConstraints1A}
			-1 & -1 & 0 & -2 & -2
			\end{bmatrix}&\leq (\textbf{x}^{-})^T\leq \begin{bmatrix}
			0 & 1 & 1 & 0 & -1
			\end{bmatrix} \\
			-\mathbb{1}&\leq M_k^{-}\textbf{x}^{-}\leq \mathbb{1}\label{equation: ReducedLinProgrammeConstraints1B}
			\end{align}
		\end{subequations}
		respectively, then the Wasserstein distance is given
		\begin{align}\label{equation: WassersteinDistanceConnectedComp}
		W(u,v)&=\left(1-\frac{\triangle_{uv}}{d_u\lor d_v}\right)\lor\left(\frac{n_u+\pentagon_u}{d_u}+\frac{n_v+\pentagon_v}{d_v}+\sum_{k}(\textbf{c}_k^0)^T\textbf{x}^0_k\right)\lor\left(\frac{n_u-1}{d_u}+\frac{2n_v}{d_v}+\sum_{k}(\textbf{c}_k^{-})^T\textbf{x}^{-}_k\right).
		\end{align}
	\end{theorem}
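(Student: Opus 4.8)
The plan is to combine Corollary~\ref{corollary: WassDist} with Proposition~\ref{proposition: W1} and Lemmas~\ref{lemma: W0}--\ref{lemma: W-1}, handling the three restricted problems $W_{-1}$, $W_0$, $W_1$ separately. Corollary~\ref{corollary: WassDist} gives $W(u,v)=W_{-1}(u,v)\lor W_0(u,v)\lor W_1(u,v)$, and Proposition~\ref{proposition: W1} identifies $W_1(u,v)=1-\triangle_{uv}(m_u\land m_v)=1-\triangle_{uv}/(d_u\lor d_v)$, which is exactly the first term on the right of equation~\ref{equation: WassersteinDistanceConnectedComp}. So the task reduces to showing that $W_0(u,v)$ equals the second bracket and $W_{-1}(u,v)$ the third; the two arguments run in parallel, driven respectively by Lemma~\ref{lemma: W0} and Lemma~\ref{lemma: W-1}, so I describe the $\alpha=0$ case in detail and indicate the modifications for $\alpha=-1$.

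For $W_0(u,v)$: by Lemma~\ref{lemma: W0}(1) it suffices to maximise $W_{uv}(y)$ over the maps $y\in\mathcal{L}_{uv}^0$ of the stated normal form, so that $y$ is frozen to $1$ on $\pentagon(u)\cup\text{Fr}(u)$, to $0$ on $\{u,v\}\cup\pentagon(u,v)$, to $-1$ on $\pentagon(v)\cup\text{Fr}(v)$, and is free only on $A\coloneqq\triangle(uv)\cup\square(u)\cup\square(v)$. Splitting $W_{uv}$ along this partition, the frozen vertices contribute the constant $m_u(\pentagon_u+n_u)+m_v(\pentagon_v+n_v)=(n_u+\pentagon_u)/d_u+(n_v+\pentagon_v)/d_v$, which is precisely the constant in the second bracket, and the remainder is a linear functional of $y|_A$ whose profit coefficients are exactly those assembled into the vectors $\textbf{c}_k^0$. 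Next I would record the a priori box constraints on $y|_A$: the generic neighbour bounds give $y|_{\triangle(uv)}\in[-1,1]$ and $y|_{\square(u)},y|_{\square(v)}\in[-1,1]$, and Lemma~\ref{lemma: W0}(2) sharpens the latter two to $y|_{\square(u)}\in[0,1]$ and $y|_{\square(v)}\in[-1,0]$, which is constraint~\ref{equation: ReducedLinProgrammeConstraints0A}. It then remains to show that the optimisation over $y|_A$ decouples across the connected components $R_1,\dots,R_K$ of the subgraph induced on $R$: this is where the distance matrix $\mathbb{D}$ of equation~\ref{equation: DistMat} is used, since every Lipschitz edge constraint linking a vertex of $A$ in one component either to a frozen vertex or to a vertex of $A$ in a different component has $\mathbb{D}$-entry at least $2$, whereas the box constraints already force the corresponding difference to be at most $2$. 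Granting this, $W_0(u,v)$ equals the above constant plus $\sum_k$ of the optimum of the reduced programme with objective $(\textbf{c}_k^0)^T\textbf{x}$ under \ref{equation: ReducedLinProgrammeConstraints0A}--\ref{equation: ReducedLinProgrammeConstraints0B}, and by the integrality theorem proved above---the constraint matrix is the incidence matrix of the digraph on $\triangle^k(u,v)\cup\square^k(u)\cup\square^k(v)$ bordered by the box-constraint rows, hence totally unimodular with integral data---these optima are attained at $\textbf{x}_k^0\in\Z^{n_1}$. This gives the second bracket.

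For $W_{-1}(u,v)$ the argument is the same with Lemma~\ref{lemma: W-1} in place of Lemma~\ref{lemma: W0}: part~(1) reduces to maps frozen to $1$ on $\text{Fr}(u)$, $0$ on $u$, $-1$ on $v$, $-2$ on $\text{Fr}(v)$ and free on $R$, whose frozen contribution is $m_u(n_u-1)+2m_vn_v=(n_u-1)/d_u+2n_v/d_v$ (the $-1$ accounting for $v\in N(u)$ at value $-1$), while $\pentagon(u,v)\subseteq R$ carries zero profit coefficient so its entries may be assigned freely and the effective blocks are $[\triangle^k(u,v),\square^k(u),\pentagon^k(u),\square^k(v),\pentagon^k(v)]$ with the coefficients collected into $\textbf{c}_k^-$. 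The box constraints now come from the generic neighbour bounds together with the extra bound from $x(v)=-1$ (giving $y|_{\triangle(uv)}\in[-1,0]$ and $y|_{\square(v)}\in[-2,0]$) and from Lemma~\ref{lemma: W-1}(3), used with Lemma~\ref{lemma: W-1}(2) en route, which forces $y|_{\pentagon(u)}\in[0,1]$ and $y|_{\pentagon(v)}\in[-2,-1]$; this reproduces constraint~\ref{equation: ReducedLinProgrammeConstraints1A}. The $\mathbb{D}$-based decoupling across the $R_k$ goes through verbatim, and the reduced programmes with objective $(\textbf{c}_k^-)^T\textbf{x}$ under \ref{equation: ReducedLinProgrammeConstraints1A}--\ref{equation: ReducedLinProgrammeConstraints1B} are again integral by the integrality theorem. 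Substituting the three expressions for $W_{-1}$, $W_0$, $W_1$ into Corollary~\ref{corollary: WassDist} then yields equation~\ref{equation: WassersteinDistanceConnectedComp}.

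I expect the main obstacle to be the decoupling step: showing rigorously that once the $\pentagon$/$\text{Fr}$ vertices (for $\alpha=0$), respectively the $\text{Fr}$ vertices (for $\alpha=-1$), are frozen, every Lipschitz constraint between two distinct components $R_k,R_\ell$, or between a component and a frozen vertex, is automatically implied by the box constraints. This requires a careful audit of $\mathbb{D}$, together with the observation that two vertices of $R$ lying in different components of the subgraph induced on $R$ are always at graph distance at least $2$, and at least $3$ when one lies in $N(u)$ and the other in $N(v)$, since any common neighbour of such a pair would itself lie in $\pentagon(u,v)\subseteq R$ and would merge the two components. A secondary, essentially routine point is that the passage from the global programme of the integrality theorem to the per-component programmes preserves integrality, which holds because bordering a digraph incidence matrix by box-constraint rows with integral right-hand side keeps it totally unimodular \cite{Schrijver_LinearProgramming}.
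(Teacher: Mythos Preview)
Your plan matches the paper's proof: split via Corollary~\ref{corollary: WassDist}, handle $W_1$ by Proposition~\ref{proposition: W1}, and reduce $W_0$, $W_{-1}$ to per-component programmes via the normal forms of Lemmas~\ref{lemma: W0}--\ref{lemma: W-1}; the paper's argument is essentially your outline with the ``careful audit'' carried out.

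One caution on the decoupling step: your blanket claim that every $A$-to-frozen pair has $\mathbb{D}$-entry at least $2$ is not literally true. For instance, in the $\alpha=0$ case a vertex of $\square^k(u)$ can be adjacent (distance $1$) to a vertex of $\pentagon^k(u)$ or of $\pentagon(u,v)\cap R_k$, both frozen in the normal form of Lemma~\ref{lemma: W0}(1). The paper handles this not by a distance bound but by a short case check: the tightened box constraints together with the specific frozen values already imply these distance-$1$ Lipschitz constraints (e.g.\ $y|_{\square^k(u)}\in[0,1]$ and $y|_{\pentagon^k(u)}\equiv 1$ give $|y(a)-1|\le 1$ automatically), while ``cross-side'' edges such as $\triangle^k(u,v)$--$\pentagon^k(u)$ or $\square^k(v)$--$\pentagon^k(u)$ are impossible since any such edge would place the pentagon vertex on a $4$-cycle supported by $uv$. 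Your flagged audit must make exactly this argument rather than appeal to a uniform $\mathbb{D}\ge 2$ bound.
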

	\begin{proof}
		The expression \ref{equation: WassersteinDistanceConnectedComp} follows immediately from corollary \ref{corollary: WassDist} and proposition \ref{proposition: W1} if we can show that 
		\begin{align}
		W_0(u,v)=\frac{n_u+\pentagon_u}{d_u}+\frac{n_v+\pentagon_v}{d_v}+\sum_{k}(\textbf{c}_k^0)^T\textbf{x}^0_k\nonumber && W_{-1}(u,v)=\frac{n_u-1}{d_u}+\frac{2n_v}{d_v}+\sum_{k}(\textbf{c}_k^{-})^T\textbf{x}^{-}_k\nonumber.
		\end{align}
		But by part (i) of lemmas \ref{lemma: W0} and \ref{lemma: W-1} the only contentious part of these expressions is related to the sum over connected components. These are adequate if we can show that the constraints \ref{equation: ReducedLinProgrammeConstraints0} and \ref{equation: ReducedLinProgrammeConstraints1} imply the constraints \ref{equation: LinearConstrain} and $x_u=0$ for a the general linear programme that specifies the Wasserstein distance $W$. We first consider the constraints \ref{equation: ReducedLinProgrammeConstraints1}; since $R_k$ is a connected component of $R\subseteq C(u,v)$ it contains all neighbours of any element $a\in R_k$ other than $u$ or $v$. Thus for any $a\in R_k$, the constraints \ref{equation: LinearConstrain} are automatically implied by the bounds \ref{equation: ReducedLinProgrammeConstraints1B}, except for those related to edges of the form $ua$ or $va$. Given $x_u=0$, $x_v^0=0$ and $x_v^-=-1$, these edges imply the additional constraints $[-1,-1,-1,-2,-2,]\leq (\textbf{x}^{-})^T\leq [0,1,1,0,0]$, while we can assume that the slightly stricter constraints \ref{equation: ReducedLinProgrammeConstraints1A} hold for optimal solutions by parts (ii) and (iii) of lemma \ref{lemma: W-1}. Similar remarks hold for the condition \ref{equation: ReducedLinProgrammeConstraints0B} except we need to additionally verify that constraints coming from edges of the form $ab$ where $b\in \pentagon^k(u)\cup \pentagon^k(v)$ are satisfied. Concretely, edges of the form $ua$ and $va$ imply the additional constraints $[-1,-1,-1]\leq (\textbf{x}^0)^T\leq [1,1,1]$ and the improved constraints \ref{equation: ReducedLinProgrammeConstraints0A} arise if we apply part (ii) of lemma \ref{lemma: W0}. For edges of the form $ab$ where $b\in \pentagon^k(u)$ we note that if $a\in R_k\cap N(v)$ then $\rho(a,b)\geq 2$ as otherwise the shortest cycle supported by $uv$ incident with $b$ would not be a pentagon. A similar argument holds if $a\in R_k\cap N(u)$ and $b\in \pentagon^k(v)$ so the only potential issues arise when we have an edge $ab$ with $a\in R_k\cap N(w)$ and $b\in \pentagon^k(w)$, $w\in \set{u,v}$. But since $\triangle^k(u,v)=R_k\cap N(u)\cap N(v)$ such edges can only exist if $a\in \square^k(w)\cup \pentagon^k(w)$ in which case the constraints \ref{equation: ReducedLinProgrammeConstraints0} ensure that $|x(a)-x(b)|\leq 1\leq \rho(a,b)$ as required.
	\end{proof}
	Heuristically this theorem suggests that no exact combinatorial expression for the Ollivier curvature exists; this is because a connected component $R_k$ can take the form of any graph $H$ given an appropriate choice of edges to the vertices $u$ and $v$ of $G$ (trivially one can connect all vertices of $H$ to both $u$ and $v$). There has thus been no essential reduction in the difficulty of the problem despite imposing all the obviously available \textit{a priori} information. Nonetheless, by imposing restrictions on the form of the connected components $R_k$, one can obtain new exact results. We will need some new notation:
	\begin{notation}
		We let $\square_\triangle(u)$ and $\square_\triangle(v)$ denote the subsets of $\square(u)$ and $\square(v)$ respectively that neighbour an element of $\triangle(u,v)$. Then $\square_\circ(u)\coloneqq \square(u)/\square_\triangle(u)$ and $\square_\circ(v)\coloneqq \square(u)/\square_\triangle(v)$. Similarly $\pentagon_{\circ}(u)$ and $\pentagon_{\circ}(u)$ are those elements of $\pentagon(u)$ and $\pentagon(v)$ respectively with a $2$-path in $R$ to $\triangle(u,v)$. Then $\square_\triangle^u\coloneqq |\square_\triangle(u)|$, $\square_\triangle^v\coloneqq |\square_\triangle(v)|$, $\square_\circ^u\coloneqq |\square_\circ(u)|$, $\square_\circ^v\coloneqq |\square_\circ(v)|$, $\pentagon_{\circ}^u\coloneqq |\pentagon_{\circ}(u)|$ and $\pentagon_{\circ}^v\coloneqq |\pentagon_{\circ}(v)|$.
	\end{notation}
	\begin{proposition}
		Let $G$ be a graph and $uv$ an edge such that $|R_k\cap (N(u)\cup N(v))|\leq 2$ for each connected component $R_k$ of $R\subseteq C(u,v)$. Then 
		\begin{align}
		\kappa(u,v)&=m_u\land m_v\triangle_{uv}-[\kappa_0(u,v)]_+-[\kappa_{-1}(u,v)]_+\nonumber\\
		\kappa_0(u,v)&\coloneqq 1-m_u-m_v-m_u\land m_v(\triangle_{uv}+ \square_\circ) -m_u\land [m_v-m_u]_+\square_\triangle^u -m_v\land [2m_u-3m_v]_+\square_\triangle^v\nonumber\\
		\kappa_{-1}(u,v)&\coloneqq 1-m_u-m_v-m_u\lor m_v\triangle_{uv}-m_u\land m_v (\square_\circ+\pentagon_{\circ}) -(m_v\land [m_u-m_v]_+-m_v\land [2m_u-3m_v]_+)\square_\triangle^v\nonumber
		\end{align}
		where $\square_\circ\coloneqq \square_\circ^u=\square_\circ^v$ and $\pentagon_{\circ}\coloneqq \pentagon_{\circ}^u=\pentagon_{\circ}^v$.
	\end{proposition}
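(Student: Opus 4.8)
The plan is to evaluate, explicitly and by hand, each of the reduced linear programmes produced by Theorem~\ref{theorem: ConnectedComponents}, exploiting the fact that the hypothesis bounds the size of every connected component $R_k$. By Corollary~\ref{corollary: WassDist} and Proposition~\ref{proposition: W1} we already have $W_1(u,v) = 1 - \triangle_{uv}(m_u\land m_v)$, and since $\rho(u,v)=1$ the claimed identity is equivalent to
\[
W(u,v) \;=\; W_1(u,v)\lor W_0(u,v)\lor W_{-1}(u,v)\;=\;W_1(u,v) + [\kappa_0(u,v)]_+ + [\kappa_{-1}(u,v)]_+ .
\]
So it remains to compute $W_0$ and $W_{-1}$ and then to collapse the three-term maximum. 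Since $\pentagon(u,v)$ is disjoint from $N(u)\cup N(v)$, the hypothesis says exactly that each $R_k$ contains at most two vertices of $\triangle(u,v)\cup\square(u)\cup\square(v)\cup\pentagon(u)\cup\pentagon(v)$, with everything else in $\pentagon(u,v)$; the first step is to enumerate the resulting short list of component shapes --- an isolated triangle vertex, a pair of adjacent triangle vertices, a $\square(u)$ (resp.\ $\square(v)$) vertex adjacent to a triangle vertex, a $\square(u)$ vertex adjacent to a $\square(v)$ vertex, and the pentagonal analogues in which a $\pentagon(u)$ (resp.\ $\pentagon(v)$) vertex is joined through one or two $\pentagon(u,v)$ vertices either to a triangle vertex or to a $\pentagon(v)$ (resp.\ $\pentagon(u)$) vertex --- and to check that the hypothesis makes the relevant pairings bijective, so that the counts collapse to $\square_\circ := \square_\circ^u = \square_\circ^v$, $\pentagon_\circ := \pentagon_\circ^u = \pentagon_\circ^v$, and $\triangle(u,v)$ decomposes into the triangle vertices not absorbed into a $\square_\triangle$-component plus those that are.

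Next I would solve the two reduced programmes for each shape. Each $W_0$-programme uses only the $\triangle^k(u,v),\square^k(u),\square^k(v)$ blocks, so it has at most two variables subject to the box bounds \ref{equation: ReducedLinProgrammeConstraints0A} and one Lipschitz edge-constraint; its optimum is read off the handful of polytope vertices, and the answer for each shape is piecewise linear in $(m_u,m_v)$ with breakpoints at $m_u=m_v$, $m_v=2m_u$ and $2m_u=3m_v$ --- e.g.\ an isolated triangle vertex contributes $|m_u-m_v|$, a $\square_\circ$-pair contributes $m_u\lor m_v$, and a $\square_\triangle(u)$-pair contributes $\max(2m_u-m_v,\ m_u,\ m_v-m_u)$. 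Each $W_{-1}$-programme uses in addition the $\pentagon^k(u),\pentagon^k(v)$ blocks and a string of zero-cost $\pentagon(u,v)$ variables; one verifies that those zero-cost variables only relay Lipschitz constraints between the two boundary variables and, in each case, solving the remaining two-variable box programme again gives a piecewise linear contribution. Substituting these contributions into the base quantities $m_u(n_u+\pentagon_u)+m_v(n_v+\pentagon_v)$ and $m_u(n_u-1)+2m_v n_v$, and simplifying with $d_w m_w = 1$, $\square_w = \square_\circ+\square_\triangle^w$, $\pentagon_w = \pentagon_\circ + (\text{others})$, yields closed forms for $W_0$ and $W_{-1}$.

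The hard part is the last step: writing $W_1\lor W_0\lor W_{-1}$ as $W_1 + [\kappa_0]_+ + [\kappa_{-1}]_+$. It is \emph{not} true that $\kappa_0$ is simply $W_0 - W_1$ and $\kappa_{-1}$ simply $W_{-1}-W_0$ --- indeed one can check these differ in their $\square_\triangle^v$- and $\pentagon_\circ$-coefficients --- and this is precisely the point at which a naive argument (as in Bhattacharya and Mukherjee) fails. What is true, and what must be proved by a regime-by-regime analysis on the ratio $m_u/m_v$, is a monotonicity statement: across all components the per-component contributions to $W_0$ and to $W_{-1}$ are ordered so that whenever a component would make $W_0$ the dominant term it contributes no more to $W_{-1}$, and conversely; this forces the triple maximum to collapse to the stated nested sum of two positive parts, and it is exactly what licenses moving part of the $\square_\triangle^v$-dependent mass (the piece $m_v\land[2m_u-3m_v]_+$) into $\kappa_0$ while the complementary piece $m_v\land[m_u-m_v]_+ - m_v\land[2m_u-3m_v]_+$ migrates into $\kappa_{-1}$. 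I expect the box-LP optimisations and the counting of component shapes to be routine but lengthy, and the monotonicity/repackaging argument --- which is where the care that Bhattacharya and Mukherjee omitted is actually needed --- to be the real content; the explicit case split the final formula already exhibits ($m_u \lessgtr m_v$, $m_v\lessgtr 2m_u$, $2m_u\lessgtr 3m_v$) is a good guide to organising it.
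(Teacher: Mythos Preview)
Your overall architecture matches the paper's proof exactly: invoke Theorem~\ref{theorem: ConnectedComponents}, enumerate the finitely many shapes a component $R_k$ can have under the hypothesis (one isolated triangle vertex, or one of seven two-vertex configurations), solve each tiny box-LP by hand, and substitute back. The paper carries this out case by case and obtains closed forms for $W_0$ and $W_{-1}$ just as you outline.

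Where you go wrong is in the final step. You assert that it is \emph{not} true that $\kappa_0=W_0-W_1$ and $\kappa_{-1}=W_{-1}-W_0$, and you therefore budget a substantial ``monotonicity/repackaging'' argument to redistribute the $\square_\triangle^v$ and $\pentagon_\circ$ mass between $\kappa_0$ and $\kappa_{-1}$. But in fact the paper's computation yields \emph{exactly}
\[
W_0(u,v)=W_1(u,v)+\kappa_0(u,v),\qquad W_{-1}(u,v)=W_1(u,v)+\kappa_0(u,v)+\kappa_{-1}(u,v),
\]
so that $\kappa_0=W_0-W_1$ and $\kappa_{-1}=W_{-1}-W_0$ on the nose; the $\square_\triangle^v$ term $m_v\land[2m_u-3m_v]_+$ appears in $\kappa_0$ because it genuinely is the $\square_\triangle^v$-contribution to $W_0-W_1$, and the difference $m_v\land[m_u-m_v]_+-m_v\land[2m_u-3m_v]_+$ in $\kappa_{-1}$ is exactly what remains in $W_{-1}-W_0$ after cancellation. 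If your own per-component optima disagree with this, recheck the $\triangle$--$\square(v)$ case: the three relevant feasible corners for $W_0$ give $(m_u-m_v)$, $m_v$, and $3m_v-m_u$, whose maximum over the regimes $m_u\gtrless m_v$ and $2m_u\gtrless 3m_v$ produces precisely the $[2m_u-3m_v]_+$ breakpoint.

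Given those identities, the collapse
\[
W_1\lor W_0\lor W_{-1}\;=\;W_1+\max\bigl(0,\ \kappa_0,\ \kappa_0+\kappa_{-1}\bigr)\;=\;W_1+[\kappa_0]_++[\kappa_{-1}]_+
\]
is the only thing left. The paper records this as ``immediately implies''. It is \emph{not} a general algebraic identity (it fails when $\kappa_0<0<\kappa_{-1}$), so one does owe a sentence: one must rule out $W_0<W_1$ with $W_{-1}>W_0$. That is a much lighter obligation than the regime-by-regime repackaging you propose, and it is the only place any extra care is needed.
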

	\begin{proof}
		We use the notation of theorem \ref{theorem: ConnectedComponents}; $x_k^0$ and $x_k^{-}$ are mappings corresponding to the optimal solutions $\textbf{x}_k^0$ and $\textbf{x}_k^{-}$  respectively. The essential idea is that we shall deduce expressions for $(\textbf{c}_k^\ominus)^T\textbf{x}^\ominus$ for every possible configuration of $R_k$ under the assumptions of the proposition by using the constraints \ref{equation: ReducedLinProgrammeConstraints0B} and \ref{equation: ReducedLinProgrammeConstraints1B} to bound $(\textbf{c}_k^0)^T\textbf{x}^0$ and $(\textbf{c}_k^-)^T\textbf{x}^-$ respectively, and then provide a Lipschitz function realising these bounds. 
		
		Suppose $|R_k\cap (N(u)\cup N(v))|=1$; then the unique vertex $a\in R_k\cap (N(u)\cup N(v))$ is an element of $\triangle^k(u,v)$ of necessity and $(\textbf{c}_k^\ominus)^T\textbf{x}^\ominus=(m_u-m_v)x_k^\ominus(a)$. We thus have the bounds $(\textbf{c}_k^0)^T\textbf{x}^0 \leq \left(m_u\lor m_v-m_u\land m_v\right)$ and $(\textbf{c}_k^{-})^T\textbf{x}^{-}\leq \left[m_v-m_u\right]_+$. Setting $x_k^0(a)=\text{sign}(d_v-d_u)$ and $x_k^-(a)=\theta_{d_v,d_u}$ saturates these bounds and the mappings so defined a Lipschitz continuous trivially. Thus we have
		\begin{subequations}
			\begin{align}
			(\textbf{c}_k^0)^T\textbf{x}_k^0&= \left(m_u\lor m_v-m_u\land m_v\right)\label{equation: ProofTA}\\
			(\textbf{c}_k^-)^T\textbf{x}_k^-&=\left[m_v-m_u\right]_+\label{equation: ProofTB}
			\end{align}
		\end{subequations}
		whenever $|R_k\cap (N(u)\cup N(v))|=1$.
		
		If $|R_k\cap (N(u)\cup N(v))|=2$ we have seven distinct possibilities which we shall treat separately below. Note that $\ominus\in \set{0,-}$ and $a$ and $b$ denote the two distinct vertices in $R_k\cap (N(u)\cup N(v))$ throughout:
		\begin{enumerate}
			\item $a,\:b\in \triangle^k(u,v)$. Since $a\neq b$, $\rho(a,b)\geq 1$. For this situation $(\textbf{c}_k^\ominus)^T\textbf{x}^\ominus=(m_u-m_v)(x_k^\ominus(a)+x_k^\ominus(b))$, so we have bounds $(\textbf{c}_k^0)^T\textbf{x}^0 \leq  2\left(m_u\lor m_v-m_u\land m_v\right)$ and $(\textbf{c}_k^{-})^T\textbf{x}^{-}\leq 2\left[m_v-m_u\right]_+$. These bounds are realised by the (Lipschitz) assignation $x_k^0(a)=x_k^0(b)=2\theta_{m_u,m_v}-1$ and $x_k^-(a)=x_k^-(b)=\theta_{m_u,m_v}-1$. Thus
			\begin{subequations}
				\begin{align}
				(\textbf{c}_k^0)^T\textbf{x}_k^0&= 2\left(m_u\lor m_v-m_u\land m_v\right)\label{equation: ProofTTA}\\
				(\textbf{c}_k^-)^T\textbf{x}_k^-&=2\left[m_v-m_u\right]_+.\label{equation: ProofTTB}
				\end{align}
			\end{subequations}
			\item $a\in \triangle^k(u,v)$ and $b\in \square(u)$. Any such configuration requires $ab\in E(G)$. In this situation we have $(\textbf{c}_k^\ominus)^T\textbf{x}^\ominus=\left(m_u-m_v\right)x_k^\ominus(a)+x_k^\ominus(b)m_u$. If $m_u\geq m_v$, we have the bounds $(\textbf{c}_k^0)^T\textbf{x}^0 \leq \left(m_u-m_v\right) +m_u$ and $(\textbf{c}_k^-)^T\textbf{x}^-\leq m_u$, while if $m_v>m_u$ then we have $(\textbf{c}_k^\ominus)^T\textbf{x}^\ominus \leq m_u\lor \left(m_v-m_u\right)$. Thus if we assign 
			\begin{align}
			x_k^0(a)&=\theta_{m_u,m_v} +(1-\theta_{m_u,m_v})(\theta_{m_u,m_v-m_u}-1) & x_k^0(b)&=\theta_{m_u,m_v} +(1-\theta_{m_u,m_v})\theta_{m_u,m_v-m_u}\nonumber\\
			x_k^-(a)&=\theta_{m_u,m_v}-1+(1-\theta_{m_u,m_v})\theta_{m_u,m_v-m_u} & x_k^-(b)&=\theta_{m_u,m_v} +(1-\theta_{m_u,m_v})\theta_{m_u,m_v-m_u}\nonumber
			\end{align}
			we have the required saturation of bounds by Lipschitz functions. Now noting that
			\begin{align}
			m_u\lor(m_v-m_u)&=(m_v-m_u)+m_u-m_u\land (m_v-m_u) & m_u\lor m_v&=m_u-m_v+m_u\land m_v \nonumber
			\end{align}
			we have
			\begin{subequations}
				\begin{align}
				(\textbf{c}_k^0)^T\textbf{x}_k^0&= \left(m_u\lor m_v-m_u\land m_v\right)+m_u-[m_u\land (m_v-m_u)]_+\label{equation: ProofTS_uA}\\
				(\textbf{c}_k^-)^T\textbf{x}_k^-&=m_u+[m_v-m_u]_+-\left[m_u\land (m_v-m_u)\right]_+\label{equation: ProofTS_uB}
				\end{align}
			\end{subequations}
			\item $a\in \triangle^k(u,v)$ and $b\in \square^k(v)$. Then $ab\in E(G)$ and $(\textbf{c}_k^\ominus)^T\textbf{x}^\ominus=\left(m_u-m_v\right)x_k^\ominus(a)-x_k^\ominus(b)m_v$. If $m_v>m_u$ we have $(\textbf{c}_k^0)^T\textbf{x}^0 \leq \left(m_v-m_u\right) + m_v$ and $(\textbf{c}_k^-)^T\textbf{x}^-\leq \left(m_v-m_u\right) + 2m_v\nonumber$, while $m_u\geq m_v$ implies:
			\begin{align}
			(\textbf{c}_k^0)^T\textbf{x}^0 &\leq (m_u-m_v)\lor \left(m_v\right)\lor (2m_v-(m_u-m_v))=(m_u-m_v)\lor (3m_v-m_u)\nonumber\nonumber\\
			(\textbf{c}_k^1)^T\textbf{x}^1&\leq m_v\lor (2m_v-(m_u-m_v))=m_v\lor (3m_v-m_u).\nonumber
			\end{align}
			The assignments
			\begin{align}
			x_k^0(a)&=(\theta_{m_u,m_v}-1)+\theta_{m_u,m_v}(1-2\theta_{m_v,m_u-m_v}) & x_k^0(b)&=(\theta_{m_u,m_v}-1)-2\theta_{m_u,m_v}\theta_{m_v,m_u-m_v}\nonumber\\
			x_k^-(a)&=(\theta_{m_u,m_v}-1)-\theta_{m_u,m_v}\theta_{m_v,m_u-m_v} & x_k^-(b)&=2(\theta_{m_u,m_v}-1)-\theta_{m_u,m_v}(1+\theta_{m_v,m_u-m_v})\nonumber
			\end{align}
			then give optimal feasible solutions to the relevant linear programmes. Also since
			\begin{align}
			(m_u-m_v)\lor (3m_v-m_u)&=(m_u-m_v)+m_v-m_v\land (2m_u-3m_v)\nonumber\\
			m_v\lor (3m_v-m_u)&=2m_v+[m_v-m_u]_+-m_v\land (m_u-m_v)\nonumber
			\end{align}
			we have
			\begin{subequations}
				\begin{align}
				(\textbf{c}_k^0)^T\textbf{x}_k^0&= \left(m_u\lor m_v-m_u\land m_v\right)+m_v-[m_v\land (2m_u-3m_v)]_+\label{equation: ProofTS_vA}\\
				(\textbf{c}_k^-)^T\textbf{x}_k^-&=2m_v+[m_v-m_u]_+-[m_v\land (m_u-m_v)]_+\label{equation: ProofTS_vB}
				\end{align}
			\end{subequations}
			\item $a\in \square^k(u)$ and $b\in \square^k(v)$. Then $a\in N(b)$ and $(\textbf{c}_k^\ominus)^T\textbf{x}^\ominus=x_k^\ominus(a)m_u-x_k^\ominus(b)m_v$, $(\textbf{c}_k^0)^T\textbf{x}^0\leq m_u\lor m_v$ and $(\textbf{c}_k^-)^T\textbf{x}^-\leq m_u\lor m_v\lor (2m_v-m_u)=m_u\lor (2m_v-m_u)$. Then the assignment
			\begin{align}
			x_k^0(a)&=\theta_{m_u,m_v} & x_k^0(b)&=\theta_{m_u,m_v}-1 & x_k^-(a)&=2\theta_{m_u,m_v}-1 & x_k^-(b)&=-2(1-\theta_{m_u,m_v})\nonumber
			\end{align}
			gives the required optimal Lipschitz function. Clearly
			\begin{subequations}
				\begin{align}
				(\textbf{c}_k^0)^T\textbf{x}_k^0&= m_u\lor m_v=m_u+m_v-m_u\land m_v\label{equation: ProofSSA}\\
				(\textbf{c}_k^-)^T\textbf{x}_k^-&= m_u+2m_v-2m_u\land m_v\label{equation: ProofSSB}
				\end{align}
			\end{subequations}
			\item $a\in \triangle^k(u,v)$ and $b\in \pentagon^k(u)$. Note that this case is equivalent to the case $|R_k\cap (N(u)\cup N(v))|=1$ for $W_0$ so we only consider the case of $W_{-1}$. Clearly $\rho(a,b)=2$ and $(\textbf{c}_k^-)^T\textbf{x}^-\leq \left(m_u-m_v\right)x_k^-(a)+x_k^-(b)m_u.$ If $m_u\geq m_v$ then $(\textbf{c}_k^-)^T\textbf{x}^-\leq m_u$ while if $m_v>m_u$ we have $(\textbf{c}_k^-)^T\textbf{x}^-\leq m_u+(m_v-m_u)=m_v$, i.e. 
			\begin{align}
			(\textbf{c}_k^-)^T\textbf{x}^-_k=m_u+[m_v-m_u]_+
			\end{align}
			where it is sufficient to specify $x_k^-(a)=\theta_{m_u,m_v}-1$ and $x_k^-(b)=1$.
			\item $a\in \triangle^k(u,v)$ and $b\in \pentagon^k(v)$. Again we need only consider the case $W_{-1}$, $\rho(a,b)=2$ and $(\textbf{c}_k^-)^T\textbf{x}^-\leq\left(m_u-m_v\right)x_k^-(a)-x_k^-(b)m_v.$ From this it is immediately apparent that $(\textbf{c}_k^-)^T\textbf{x}^-\leq 2m_v+[m_v-m_u]_+$ so choosing $x_k^-(a)=\theta_{m_u,m_v}-1$ and $x_k^-(b)=-2$ implies
			\begin{align}
			(\textbf{c}_k^-)^T\textbf{x}^-_k= 2m_v+[m_v-m_u]_+.
			\end{align}
			\item $a\in \pentagon^k(u)$ and $b\in \pentagon^k(v)$. We ignore the case $W_0$ and note that $\rho(a,b)=2$ and $(\textbf{c}_k^-)^T\textbf{x}^-\leq x_k^-(a)m_u-x_k^-(b)m_v$. Clearly, $(\textbf{c}_k^-)^T\textbf{x}^-\leq (m_u+m_v)\lor (2m_v)$ which is immediately realised by a feasible solution if we assign $x_k^-(a)=\theta_{m_u,m_v}$ and $x_k^b(a)=\theta_{m_u,m_v}-2$. Then
			\begin{align}
			(\textbf{c}_k^-)^T\textbf{x}^-_k= (m_u+m_v)\lor (2m_v)=2m_v+m_u-m_u\land m_v
			\end{align}
		\end{enumerate}
		We can now write down expressions for $W_0$ and $W_1$. We consider $W_0$ first:  considering equations \ref{equation: ProofTA}, \ref{equation: ProofTTA}, \ref{equation: ProofTS_uA}, \ref{equation: ProofTS_vA} and \ref{equation: ProofSSA} in conjunction immediately implies:
		\begin{align}
		W_0(u,v)&=m_u(n_u+\pentagon_u)+m_v(n_v+\pentagon_v)+(m_u\lor m_v-m_u\land m_v)\triangle_{uv}+m_u\square_u+m_v\square_v\nonumber\\
		&\qquad -\square_\triangle^u[m_u\land (m_v-m_u)]_+-\square_\triangle^v[m_v\land (2m_u-3m_v)]_+-m_u\land m_v\square_\circ\nonumber\\
		&=(1-m_u\land m_v \triangle_{uv})+(1-m_u-m_v-m_u\lor m_v\triangle_{uv}-(m_u\square_u+m_v\square_v))\nonumber\\
		&\qquad +(m_u\lor m_v-m_u\land m_v)\triangle_{uv}+m_u\square_u+m_v\square_v -\square_\triangle^u[m_u\land (m_v-m_u)]_+-\square_\triangle^v[m_v\land (2m_u-3m_v)]_+\nonumber\\
		&=(1-m_u\land m_v \triangle_{uv})+(1-m_u-m_v-m_u\land m_v\triangle_{uv}-m_u\land m_v\square_\circ)-\square_\triangle^u[m_u\land (m_v-m_u)]_+\nonumber\\
		&\qquad -\square_\triangle^v[m_v\land (2m_u-3m_v)]_+.\nonumber
		\end{align}
		Similarly:
		\begin{align}
		W_{-1}(u,v)&=m_u(n_u-1)+2m_vn_v+[m_v-m_u]_+\triangle_{uv}+m_u\square_u+2m_v\square_v +m_u\pentagon_u+2m_v\pentagon_v\nonumber\\
		&\qquad -\square_\triangle^u[m_u\land(m_v-m_u)]_+\nonumber-\square_\triangle^v[m_v\land (m_u-m_v)]_+-2m_u\land m_v\square_\circ-\pentagon_{\circ}m_u\land m_v\nonumber\\
		&=(1-m_u\land m_v\triangle_{uv})+(1-m_u-m_v-m_v\triangle_{uv}-(m_u\square_u +m_v\square_v))\nonumber\\
		&\qquad \qquad +(1-m_u-m_v-m_u\lor m_v\triangle_{uv}-m_v\square_v-(m_u\pentagon_u+2m_v\pentagon_v))\nonumber\\
		&\qquad +[m_v-m_u]_+\triangle_{uv}+m_u\square_u+2m_v\square_v +m_u\pentagon_u+2m_v\pentagon_v\nonumber\\
		&\qquad -\square_\triangle^u[m_u\land(m_v-m_u)]_+\nonumber-\square_\triangle^v[m_v\land (m_u-m_v)]-2m_u\land m_v\square_\circ-\pentagon_{\circ}m_u\land m_v\nonumber\\
		&=(1-m_u\land m_v\triangle_{uv})+(1-m_u-m_v-m_u\land m_v\triangle_{uv}-m_u\land m_v \square_\circ)\nonumber\\
		&\qquad \qquad +(1-m_u-m_v-m_u\lor m_v\triangle_{uv}-m_u\land m_v \square_\circ-m_u\land m_v\pentagon_{\circ})\nonumber\nonumber\\
		&\qquad -\square_\triangle^u[m_u\land(m_v-m_u)]_+-\square_\triangle^v[m_v\land (m_u-m_v)]_+.\nonumber
		\end{align}
		This immediately implies
		\begin{align}
		W(u,v)&=(1-m_u\land m_v\triangle_{uv})\nonumber\\
		&\qquad +\left[1-m_u-m_v-m_u\land m_v\triangle_{uv}-m_u\land m_v \square_\circ -m_u\land [m_v-m_u]_+\square_\triangle^u -m_v\land [2m_u-3m_v]_+\square_\triangle^v\right]_+\nonumber\\
		&\qquad +\left[1-m_u-m_v-m_u\lor m_v\triangle_{uv}-m_u\land m_v (\square_\circ+\pentagon_{\circ})\nonumber -(m_v\land [m_u-m_v]_+-m_v\land [2m_u-3m_v]_+)\square_\triangle^v\right]_+\nonumber
		\end{align}
		as required.
	\end{proof}
	\section*{Acknowledgements}
	The author would like to acknowledge studentship funding from EPSRC under the grant number EP/L015110/1. Also, many thanks to Fabio Biancalana and Carlo Trugenberger for helpful discussion on the topic and advice about the text. David Cushing pointed out an error in counterexample \ref{counterexample1} in an earlier version of this paper, which was corrected with the help of the graph curvature calculator \cite{GraphCurvatureCalculator}.
	\section*{References}
	\bibliographystyle{plain}
	\bibliography{Ref}
\end{document}